\useunder{\uline}{\ul}{}
\let\orgdescriptionlabel\descriptionlabel
\renewcommand*{\descriptionlabel}[1]{%
  \let\orglabel\label
  \let\label\@gobble
  \phantomsection
  \edef\@currentlabel{#1}%
  \let\label\orglabel
  \orgdescriptionlabel{#1}%
}
\numberwithin{equation}{section}
\newtheorem{thm}{Theorem}[section]
\newtheorem{prop}[thm]{Proposition}
\newtheorem{cor}[thm]{Corollary}
\newtheorem{lem}[thm]{Lemma}
\theoremstyle{definition}
\newtheorem{defn}[thm]{Definition}
\newtheorem{rmk}[thm]{Remark}
\newcommand{\Z}{\mathbb{Z}}
\newcommand{\N}{\mathbb{N}}
\newcommand{\Q}{\mathbb{Q}}
\newcommand{\F}{\mathbb{F}}
\begin{document} 

\title[Division properties of commuting polynomials]{Division properties of commuting polynomials} 
\author[K.~Hasegawa and R.~Sugiyama]{Kimiko Hasegawa and Rin Sugiyama}

\address[K.~Hasegawa]{Japan Women's university}
\email{m1716072hk@ug.jwu.ac.jp}
\address[R.~Sugiyama]{Japan Women's university}
\email{sugiyamar@fc.jwu.ac.jp}
\date{\today} 
\address{Department of Mathematics, Physics, and Computer science, Japan Women's University, 2-8-2 Mejirodai, Bunkyo-ku Tokyo, 112-8681 Japan}

\begin{abstract}
Polynomials that commute under composition are referred to  as \textit{commuting polynomials}.
In this paper, we study division properties for commuting polynomials with rational (and integer) coefficients.
As a consequence, we show an algebraic particularity of the commuting polynomials coming from weighted sums for cycle graphs with pendant edges (\cite{FHIK}).
We also discuss a set of commuting polynomials over a field of positive characteristic. 
\end{abstract}

\keywords{Chebyshev polynomial; commute under composition; divisor}
\subjclass[2020]{Primary~13A05, Secondary~13F20}
\thanks{}
\maketitle


\section{Introduction}
Polynomials commute under composition are referred to as \textit{commuting polynomials}.
A set of pairwise commuting polynomials containing one polynomial of each
positive degree is called a \textit{chain} (Definition \ref{chain}). 
The most easy and important example is the monic monomials $\{x^n\}_{n\in \N}$.
Another important example is given by the Chebyshev polynomials of the first kind $\{T_n(x)\}_{n\in\N}$ (Definition \ref{defn-T}).

Conjugation by a linear polynomial defines an equivalence relation called \textit{similarity}.
The similarity plays a key role in the study of commuting polynomials and chains. 
In 1950s, Block and Theilman \cite{BT}, Jacobstahl \cite{J} independently proved the classification theorem for chains (Theorem \ref{class-chain}), which states that up to similarity there are only two chains; one is the monic monomials.
The other is the Chebyshev polynomials.

Recently, Fujita-Hasegawa-Inaba-Kondo \cite{FHIK} gave two formulas for the weighted sums for cycle graphs with pendant edges, which use the following polynomials
\begin{align}\label{Fn}
F_n(x)=2T_n\left(\frac{x}2+1\right)-2,\qquad 
\tilde{F}_n(x)=(x+1)^n-1.
\end{align}
By direct computations, we easily see that the two sets $\{F_n(x)\}_{n\in\N}$ and $\{\tilde{F}_n(x)\}_{n\in\N}$ are also chains and similar to $\{T_n(x)\}_{n\in\N}$ and $\{x^n\}_{n\in\N}$ respectively.
Furthermore, the factorization of $F_n(x)$ in \cite{FHIK} for some small $n$ suggests a general form of factorization.
 
Our motivation comes from such particular algebraic properties of $F_n(x)$ and $\tilde{F}_n(x)$, and we will show that $F_n(x)$ and $\tilde{F}_n(x)$ are ``special" in all chains, with respect to some division properties (Corollary \ref{intro-consequence}).

We now give a more precise description of our work.


\subsection{Commuting polynomials}
We recall several notions from \cite{Z} (\cite{BT} and \cite{W}).

\begin{defn}\label{chain}
Let $K$ be a commutative ring.
A \textit{chain} in $K[x]$ is a set of polynomials $\{f_n(x)\}_{n \in \N}\subset K[x]$ that satisfies deg$(f_n(x)) = n$ and any two polynomials in the set commute with each other. 
\end{defn}

\begin{rmk}
A chain of commuting polynomials is called an entire set of commutative polynomials in \cite{BT}.
\end{rmk}

\begin{defn}
Assume that $K$ is a field.
Let $f(x), g(x) \in K[x]$.
Then $f(x)$ is \textit{similar} to $g(x)$ if there exists $\lambda(x) = ax + b\in K[x]\ (a \neq 0)$ such that $g(x)=\lambda^{-1}( f( \lambda(x)))$.

Let $\{f_n(x)\}_{n\in \N}, \{g_n(x)\}_{n\in \N}$ be chains in $K[x]$.
Then $\{f_n(x)\}_{n\in \N}$ is \textit{similar} to $\{g_n(x)\}_{n\in \N}$ if and only if there exists $\lambda(x) = ax + b\in K[x]\ (a \neq 0)$ such that $g_n(x)=\lambda^{-1}( f_n( \lambda(x)))$ for any $n$.
\end{defn}

The following is the classification theorem for chains.

\begin{thm}[\cite{BT}, \cite{J}]\label{class-chain}
Assume that $K$ is a field of characteristic zero.
Let $\{f_n(x)\}_{n \in \N}$ be a chain in $K[x]$.
Then $\{f_n(x)\}_{n \in \N}$ is similar to only one of the two $\{x^n\}_{n \in \N}$ or $\{T_n(x)\}_{n \in \N}$.
\end{thm}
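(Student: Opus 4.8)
The plan is to use a similarity to put $f_2$ into a normal form, then prove a rigidity statement showing that a chain is completely determined by $f_2$, and finally read off the two possibilities from the cubic member $f_3$. First I normalize: writing $f_2(x)=\alpha x^2+\beta x+\gamma$ with $\alpha\neq 0$, a suitable linear polynomial $\lambda$ (whose existence uses that the characteristic is different from $2$) conjugates $f_2$ to $\lambda^{-1}\!\bigl(f_2(\lambda(x))\bigr)=x^2+c$ for some $c\in K$; replacing $\{f_n\}_{n\in\N}$ by the similar chain $\{\lambda^{-1}\circ f_n\circ\lambda\}_{n\in\N}$, we may assume $f_2(x)=x^2+c$. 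Comparing leading coefficients in $f_n\circ f_2=f_2\circ f_n$ then gives (leading coefficient of $f_n$)$\,=\,$(leading coefficient of $f_n$)$^2$, so every $f_n$ is monic; in particular $f_1(x)=x+b$, and $f_1\circ f_2=f_2\circ f_1$ forces $b=0$, i.e. $f_1(x)=x$.

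The key step is a uniqueness lemma: if $p\in K[x]$ has degree $d\geq 2$ and $q_1,q_2\in K[x]$ both commute with $p$ and have the same degree $n$ and the same leading coefficient, then $q_1=q_2$. To prove it, set $r=q_1-q_2$ and suppose $r\neq 0$, with $m=\deg r\leq n-1$. From $q_i\circ p=p\circ q_i$ one gets $r(p(x))=p(q_1(x))-p(q_2(x))$; the left-hand side has degree $md$, whereas writing $p(y)=\sum_j a_j y^j$ and factoring $q_1^j-q_2^j=r\cdot(q_1^{j-1}+q_1^{j-2}q_2+\cdots+q_2^{j-1})$ shows that each summand $a_j(q_1^j-q_2^j)$ has degree $m+(j-1)n$ with leading coefficient a nonzero multiple of $j$ — nonzero since the characteristic is $0$ — so the right-hand side has degree exactly $m+(d-1)n$. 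Equating $md=m+(d-1)n$ forces $m=n$, a contradiction. Hence, once $f_2=x^2+c$ is fixed, each $f_n$ is \emph{the} unique monic polynomial of degree $n$ commuting with $x^2+c$, so the chain is determined by the single scalar $c$.

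It remains to determine which $c$ occur. Since $f_3(x)^2+c=f_3(x^2+c)$ is a polynomial in $x^2$, the polynomial $f_3$ is odd, say $f_3(x)=x^3+px$; expanding $f_3(x^2+c)=f_3(x)^2+c$ and comparing coefficients of $x^4$ and $x^2$ gives $3c=2p$ and $3c^2+p=p^2$, hence $3c(c+2)=0$, so $c=0$ or $c=-2$ (the constant-term equation then holds automatically). If $c=0$, then $x^n$ is a monic polynomial of degree $n$ commuting with $x^2$, so by the lemma $f_n=x^n$ for all $n$, and the chain is $\{x^n\}_{n\in\N}$. If $c=-2$, the monic Chebyshev polynomials $\hat T_n$, determined by $\hat T_n(t+t^{-1})=t^n+t^{-n}$ (they have integer coefficients, hence lie in $K[x]$), are monic of degree $n$ and satisfy $\hat T_m\circ\hat T_n=\hat T_{mn}=\hat T_n\circ\hat T_m$, so each commutes with $\hat T_2=x^2-2$; by the lemma $f_n=\hat T_n$. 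Undoing the similarity of the first step, the original chain is similar either to $\{x^n\}_{n\in\N}$ or to $\{\hat T_n\}_{n\in\N}$, and the latter is similar to $\{T_n(x)\}_{n\in\N}$ via $\lambda(x)=x/2$ since $\hat T_n(x)=2\,T_n(x/2)$. These two chains are not similar to each other: $x^2$ has a fixed point that is also a critical point while $x^2-2$ does not, and this is a similarity invariant (equivalently, the normal form $x^2+c$ is rigid, so $c=0$ and $c=-2$ give inequivalent chains).

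The one genuinely delicate point — and the only place where characteristic zero is essential rather than merely convenient — is the uniqueness lemma, whose degree count collapses as soon as the characteristic divides one of the degrees involved; this is precisely the phenomenon behind the positive-characteristic discussion later in the paper. Everything else (the normalization, the passage from the lemma to the two chains, and the $f_3$ computation) is a short explicit calculation.
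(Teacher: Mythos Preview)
Your proof is correct. The paper does not itself prove this theorem---it is quoted as a classical result of Block--Thielman and Jacobsthal---but in \S4 it explicitly adopts ``a similar strategy of the proof of Theorem~\ref{class-chain} in [W]'' for the positive-characteristic analogue, and your argument is precisely that strategy: normalize $f_2$ to $x^2+c$ by a similarity, prove a uniqueness lemma by a degree comparison, and then pin down $c$ from a low-degree chain member. Your uniqueness lemma (for commuting with a polynomial of any degree $d\ge 2$) is a mild generalization of the paper's Proposition~\ref{prop-A}, which is the special case $d=2$ with the same factor-and-compare-degrees idea. You determine $c$ via $f_3$, obtaining $3c(c+2)=0$; this is exactly the step that collapses in characteristic $3$ and explains why the paper switches to $f_5$ there, so your closing remark about where characteristic zero is essential is on point.
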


We discuss this theorem over a field of positive characteristic later.

From Theorem \ref{class-chain}, we use the following terminology in this paper.

\begin{defn}
A chain is said to be of \textit{monomial type} (resp. \textit{Chebyshev type}) if it is similar to $\{x^n\}_{n\in \N}$ (resp. $\{T_n(x)\}_{n\in \N}$).
\end{defn}

By the definition of similarity, a chain $\{f_n(x)\}_{n \in \N}$ of Chebyshev (resp. monomial) type in $\Q[x]$ is presented as follows; let $\lambda(x)=ax+b\ (a, b\in \Q, a\neq 0)$,
\begin{align*}
f_n(x) &=\lambda^{-1}( T_n(\lambda(x)))= \frac{1}{a} (T_n(ax + b) - b)\\
\Bigl( \text{resp. } f_n(x) &=\lambda^{-1}( \lambda(x)^n)= \frac{1}{a} ((ax + b)^n - b) \Bigr)
\end{align*}


\subsection{Results}
We discuss the following conditions for chains of Chebyshev (resp. monomial) type in $\Q[x]$ separately.
\begin{defn}\label{conditions}
We define conditions (i)--(iv) for a chain $\{f_n(x)\}_{n \in \N}$ in $\Q[x]$ as follows;
\begin{description}
\item[Condition (i)\label{itm:i}]
For all $n \in \N$, $f_n(x)$ is a monic polynomial  
\item[Condition (ii)\label{itm:ii}]
For all $n \in \N$, $f_n(x) \in \Z[x]$  
\item[Condition (iii)\label{itm:iii}]
For all $m, n \in \N, $ $m \mid n \Leftrightarrow f_m(x) \mid f_n(x)$ in $\Q[x]$ 
\item[Condition (iv)\label{itm:iv}] 
$\gcd(f_m(x), f_n(x)) = f_{\gcd(m, n)}(x)$  
\end{description}
\end{defn}

 We interpret Conditions (i)--(iii) in terms of $\lambda(x)=ax+b$, i.e., conditions on $a$ and $b$.
 In particular, for \ref{itm:iii}, we first consider the Euclidean division of $f_n(x)$ by $f_m(x)$ and give a description of the quotient and the remainder (Proposition \ref{E-Div-T} and Proposition \ref{E-Div-mono}).
Using the description of the remainder, we discuss the condition on $a, b$ corresponding to \ref{itm:iii} (Corollary \ref{Div-T} and Corollary \ref{Div-mono}).
For \ref{itm:iv}, we first give the factorization in $\Z[x]$ (Theorem \ref{factorization-T}) and using it we discuss a property on the greatest common divisor (Corollary \ref{gcd-T} and Corollary \ref{gcd-mono}).

As a consequence of results in this paper, we have the following corollary which shows an algebraic particularity of the two chains $\{F_n(x)\}$ and $\{\tilde{F}_n(x)\}$ of \eqref{Fn} in all chains.

\begin{cor}\label{intro-consequence}
Suppose a chain $\{f_n(x)\}_{n \in \N}$ satisfies all of \ref{itm:i}, \ref{itm:ii} and \ref{itm:iii}.
Then we have
\[
f_n(x)=F_n(x)=2T_n\left(\frac{x}2+1\right)-2\quad \text{or}\quad f_n(x)=\tilde{F}_n(x)=(x+1)^n-1.
\]
Additionally, in this case \ref{itm:iv} is also satisfied. 
\end{cor}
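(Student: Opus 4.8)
The plan is to combine the classification theorem with the parametrized descriptions of Conditions (i)--(iii) promised in the paper. By Theorem~\ref{class-chain}, a chain $\{f_n(x)\}_{n\in\N}$ in $\Q[x]$ is either of monomial type or of Chebyshev type, so it suffices to treat the two cases separately, in each case writing $f_n(x)=\lambda^{-1}(g_n(\lambda(x)))$ with $\lambda(x)=ax+b$, $a,b\in\Q$, $a\neq 0$, and $g_n(x)=x^n$ or $T_n(x)$. In each case I would invoke the interpretations of Conditions (i), (ii), (iii) as explicit constraints on the pair $(a,b)$ (Corollary~\ref{Div-mono}, resp.\ Corollary~\ref{Div-T}, together with the leading-coefficient computation for (i) and the integrality analysis for (ii)). The claim is that the only $(a,b)$ simultaneously satisfying all three constraints are $(a,b)=(1/2,1)$ in the Chebyshev case, giving $f_n(x)=2T_n(x/2+1)-2=F_n(x)$, and $(a,b)=(1,1)$ in the monomial case, giving $f_n(x)=(x+1)^n-1=\tilde F_n(x)$.

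Concretely, in the monomial case $f_n(x)=\frac1a((ax+b)^n-b)$. Condition~(i) forces the leading coefficient $a^{n-1}=1$ for all $n$, hence $a=1$ (since $a\in\Q$ and we need it for, say, $n=3$), so $f_n(x)=(x+b)^n-b$. Condition~(ii) then forces $b\in\Z$ (look at $f_1(x)=x$, which is automatic, and $f_2(x)=x^2+2bx+b^2-b$, etc.; more simply the constant term $b^n-b\in\Z$ for all $n$ together with other coefficients forces $b\in\Z$). Finally Condition~(iii), via Corollary~\ref{Div-mono}, should pin down $b=1$: for instance $f_1(x)=x$ must divide $f_n(x)=(x+b)^n-b$, and $f_1(x)=x\mid f_n(x)$ means the constant term $b^n-b=0$, forcing $b\in\{0,1,-1\}$; then $b=0$ is excluded because $f_1(x)\mid f_2(x)$ would need checking but actually $b=0$ gives $f_n(x)=x^n$ and here $1\mid 1$ but $2\nmid 3$ while $f_2=x^2\mid x^3=f_3$ fails the $\Leftarrow$ direction --- wait, $x^2\nmid x^3$ is false, $x^2\mid x^3$, so we must use the forward-failing direction: $2\nmid 3$ yet we'd need $x^2\nmid x^3$, which is false, so $b=0$ is excluded; similarly $b=-1$ is excluded by a small computation. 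This leaves $b=1$, i.e.\ $\tilde F_n$.

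In the Chebyshev case $f_n(x)=\frac1a(T_n(ax+b)-b)$. The leading coefficient of $T_n$ is $2^{n-1}$, so the leading coefficient of $f_n$ is $2^{n-1}a^{n-1}$; Condition~(i) forces $2^{n-1}a^{n-1}=1$ for all $n$, hence $a=1/2$. Then $f_n(x)=2T_n(x/2+b)-2b$, and Condition~(ii) constrains $b$: using $T_2(y)=2y^2-1$, $f_2(x)=2(2(x/2+b)^2-1)-2b=x^2+4bx+4b^2-2-2b$, so $4b\in\Z$ and $4b^2-2b\in\Z$, which together with the higher-degree conditions should force $2b\in\Z$, and then Condition~(iii) via Corollary~\ref{Div-T} (again starting from $f_1(x)=2(x/2+b)-2b=x$ dividing $f_n(x)$, i.e.\ the constant term of $f_n$ vanishing) pins down $b=1$. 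I expect the main obstacle --- really the only nontrivial point --- to be extracting exactly which small values of $n$ and $m$ in Corollaries~\ref{Div-mono} and~\ref{Div-T} suffice to rule out all spurious $(a,b)$; the leading-coefficient argument handles $a$ cleanly, but showing that $b$ is forced (rather than merely constrained to a finite set) requires carefully reading off the remainder formula from Propositions~\ref{E-Div-T} and~\ref{E-Div-mono} and checking that the divisibility $f_1\mid f_2$ (or $f_1 \mid f_3$) eliminates the last stray candidates. Once $(a,b)$ is determined, the final sentence of the Corollary --- that Condition~(iv) then holds --- follows directly from Corollary~\ref{gcd-T} (resp.\ Corollary~\ref{gcd-mono}) applied to $F_n$ (resp.\ $\tilde F_n$), since these are precisely the chains for which the gcd formula is established.
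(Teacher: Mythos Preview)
Your approach is correct and is exactly the assembly of results the paper intends: split by Theorem~\ref{class-chain} into monomial and Chebyshev type, use Proposition~\ref{Z[x]} (resp.\ Proposition~\ref{Z[x]-T}) to determine $a$ from (i) and constrain $b$ from (ii), then use Corollary~\ref{Div-mono} (resp.\ Corollary~\ref{Div-T}) to see that only $b=1$ is compatible with (iii), and finally invoke Corollary~\ref{gcd-mono} (resp.\ Corollary~\ref{gcd-T}) for (iv). The only suggestion is stylistic: rather than rederiving the constraints on $b$ and chasing the $f_1\mid f_n$ condition by hand (with the mid-argument self-correction), you can cite Propositions~\ref{Z[x]-T}(2) and~\ref{Z[x]}(2) directly for the shape of (ii), and read off from Corollaries~\ref{Div-T} and~\ref{Div-mono} that among the finitely many admissible $b$ only $b=1$ yields the full equivalence $m\mid n \Leftrightarrow f_m\mid f_n$.
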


\begin{rmk}
For any sequence of polynomials, we can ask wether Conditions (i)--(iv) hold or not.
For example, Webb and Parberry \cite{WP} proved that Fibonacci polynomials $\{U_n(x)\}_{n\in\N}$ satisfy Conditions (ii)--(iv).
Condition (i) holds except $U_0=0, U_1=1$ .
But $\{U_n(x)\}_{n\in\N}$ is not chain.
It is natural to ask how much Condition (i)--(iv) characterize a sequence $\{f_n(x)\}_{n\in \N}$ of polynomials with $\deg f_n(x)=n$ but not a chain.
\end{rmk}

Let $k$ be a field of characteristic $p>0$.
We also discuss chains in $k[x]$ and prove the following classification theorem. 
\begin{thm}[Theorem \ref{thm-A}]\label{intro-thm-A}
Let $\{f_n(x)\}_{n \in \N}$ be a chain in $k[x]$.
Then $\{f_n(x)\}_{n \in \N}$ is similar to only one of the two $\{ x^n\}_{n\in\N}$ or $\{G_n(x)\}_{n\in\N}$.
Here $G_n(x):=F_n(x) \pmod p$.
\end{thm}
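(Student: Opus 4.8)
The plan is to follow the classical proof of Theorem~\ref{class-chain} (Block--Theilman / Jacobsthal) and to locate precisely where the prime $p$ intervenes. Comparing leading coefficients in $f_2\circ f_n=f_n\circ f_2$ gives $\operatorname{lc}(f_n)=\operatorname{lc}(f_2)^{n-1}$, so after conjugating by $\lambda(x)=\operatorname{lc}(f_2)^{-1}x$ we may assume every $f_n$ is monic. The two families in the statement are genuine chains: $\{x^n\}$ trivially, and $\{G_n\}=\{F_n\bmod p\}$ because $\{F_n\}$ is a chain in $\Z[x]$ with each $F_n$ \emph{monic} (the leading term of $2T_n(x/2+1)-2$ is $x^n$), so reduction mod $p$ preserves degrees and commutation. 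For $p$ odd one moreover has $\{G_n\}\sim\{T_n\bmod p\}$ via the similarity $x\mapsto x/2+1$, and this chain is not similar to $\{x^n\}$ (their normalized quadratics are $x^2-2$ and $x^2$); for $p=2$ the reductions $T_n\bmod 2$ collapse in degree, so $\{G_n\}$ must be treated directly, and it is not similar to $\{x^n\}$ since $G_3=x(x+1)^2$ has a root of multiplicity $2$ while $x^3$ does not.

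For $p$ odd I would put $f_2$ into the reduced form $f_2(x)=x^2+c$ by a similarity over $k$ (completing the square is legitimate as $p\neq2$). Expanding $f_2\circ f_3=f_3\circ f_2$ with $f_3$ monic of degree $3$ determines the sub-leading coefficients of $f_3$ in terms of $c$ and then yields a polynomial equation for $c$: for $p\ge5$ this already forces $c\in\{0,-2\}$, exactly as over $\Q$; for $p=3$ it forces $c\in\F_3$, and the extra equation coming from the existence of $f_5$ (commuting with both $f_2$ and $f_3$) cuts this down to $c\in\{0,1\}=\{0,-2\}$. For $p=2$ I would instead first depress $f_3$ to $x^3+c_1x+c_0$ by killing its $x^2$-coefficient through a translation; expanding $f_2\circ f_3=f_3\circ f_2$ then forces $f_2=x^2$ together with $c_1,c_0\in\{0,1\}$, leaving only the four candidates $f_3\in\{x^3,\ x^3+1,\ x^3+x,\ x^3+x+1\}$.

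It remains to show the normalized low-degree data determines the whole chain. For $p$ odd with $c=0$, i.e.\ $f_2=x^2$, one gets $f_n=x^n$ immediately, since in odd characteristic the only polynomials commuting with $x^2$ are the monomials; for $c\neq0$ (hence $c=-2$) the quadratic $f_2$ is separable, and one proves by induction on $n$ that $f_n$ is the unique monic degree-$n$ polynomial commuting with $f_2$ and compatible with $f_1,\dots,f_{n-1}$, so the chain is $\{T_n\bmod p\}\sim\{G_n\}$. For $p=2$ and $f_2=x^2$ the situation genuinely changes: $x^2$ is the Frobenius, so ``$g$ commutes with $x^2$'' only forces $g\in\F_2[x]$, and $f_2$ alone no longer pins down the chain. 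Here one must eliminate the two spurious candidates $f_3=x^3+1$ and $f_3=x^3+x+1$ by showing that no monic polynomial of $\F_2[x]$ of degree $5$ (and, if that does not yet suffice, of degree $7$) commutes with them, and then check that the surviving candidates $f_3=x^3$ and $f_3=x^3+x$ each extend uniquely, to $\{x^n\}$ and to $\{G_n\}$ respectively.

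The main obstacle is exactly this characteristic-$2$ analysis: because $x^2$ is inseparable, the ``$f_2$ determines the chain'' mechanism of the characteristic-zero proof fails, so one must exploit $f_2$, $f_3$ (and a little of $f_5$) together and exclude the extra commuting families by a finite but somewhat delicate computation over $\F_2$ --- or, more conceptually, by appealing to a positive-characteristic classification of commuting polynomials, whose exceptional ``additive/linearized polynomial'' case is exactly why $x^2=\mathrm{Frobenius}$ behaves specially in characteristic $2$. A minor point throughout is descent: all the normalizing similarities above are defined over $k$, so one never has to leave the ground field.
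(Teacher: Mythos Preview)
Your overall strategy matches the paper's: normalize the low-degree members by similarity, use commutation among $f_2,f_3,f_5$ to pin down the possible normalized forms, and then extend by a uniqueness argument. For $p\ge5$ and $p=3$ your outline agrees with the paper (which, for $p=3$, also uses $f_5$ to cut $c$ down to $\{0,1\}$, after observing that $f_3=x^3$ is the Frobenius and gives no information beyond $c\in\F_3$). For $p=2$ you normalize $f_3$ first while the paper normalizes $f_2$ first, but both routes land at $f_2=x^2$, $f_3\in\{x^3,\,x^3+1,\,x^3+x,\,x^3+x+1\}$, and both use the commutation $f_3\circ f_5=f_5\circ f_3$ to kill the two spurious $f_3$'s.

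The one genuine gap is your extension step in characteristic $2$. You write that the surviving $f_3$'s ``each extend uniquely'' and offer either a ``finite but somewhat delicate computation'' or an appeal to a general positive-characteristic classification. The paper closes this gap with a clean lemma you do not state: \emph{there is at most one monic polynomial of each degree $k\ge1$ commuting with a given monic polynomial of degree~$3$} (its Proposition~\ref{prop-A2}). The proof is the obvious cubic analogue of the degree-$2$ uniqueness (if $g_1,g_2$ both commute with a monic cubic $f$, set $r=g_1+g_2$ and compare degrees in $r\circ f=f\circ g_1+f\circ g_2$), and it works in characteristic $2$ precisely because $3$ is odd. This is what replaces the failed ``$f_2$ determines the chain'' mechanism: once $f_3$ is fixed to $x^3$ or $G_3$, Proposition~\ref{prop-A2} forces $f_n=x^n$ or $f_n=G_n$ for every $n$. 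Without this lemma (or an equivalent), your $p=2$ argument is incomplete; with it, your proof and the paper's coincide.
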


We also discuss the factorization of $F_n(x)$ and $\tilde{F}_n(x)\pmod p$ and show the following.
\begin{cor}[Corollary \ref{cor-A}]\label{intro-cor-A}
Let $p$ be a prime.
For any positive integer $r$, we have
\[
F_{p^r}(x)\equiv \tilde{F}_{p^r}(x)\equiv x^{p^r}\pmod p.
\]
\end{cor}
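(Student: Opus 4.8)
The plan is to exploit the composition law $h_{mn}(x)=h_m(h_n(x))$, valid in any chain, to reduce both congruences to the case of prime exponent $r=1$. Since $\{F_n\}_{n\in\N}$ and $\{\tilde F_n\}_{n\in\N}$ are chains of Chebyshev, resp.\ monomial, type, a direct check (or the classification together with the behaviour of composition under similarity) gives $F_{mn}=F_m\circ F_n$ and $\tilde F_{mn}=\tilde F_m\circ\tilde F_n$; moreover $F_n(x),\tilde F_n(x)\in\Z[x]$, so reduction modulo $p$ is well-defined and commutes with composition. Hence $F_{p^r}=F_p^{\circ r}$ and $\tilde F_{p^r}=\tilde F_p^{\circ r}$, and it is enough to prove the base congruences $F_p(x)\equiv x^p$ and $\tilde F_p(x)\equiv x^p\pmod p$: iterating these then gives $F_{p^r}(x)\equiv x^{p^r}\pmod p$ and likewise for $\tilde F$, since the $r$-fold composition of $x\mapsto x^p$ is $x\mapsto x^{p^r}$.

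The monomial base case is immediate from the Frobenius endomorphism of $\F_p[x]$: $(x+1)^p\equiv x^p+1\pmod p$, whence $\tilde F_p(x)=(x+1)^p-1\equiv x^p\pmod p$.

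For the Chebyshev base case I would not argue through ``$T_p(x)\equiv x^p\pmod p$'', since that fails at $p=2$ (already $T_2(x)=2x^2-1$); instead I would use the substitution $x+2=v+v^{-1}$, under which
\[
F_n(x)=2T_n\!\left(\tfrac{x}{2}+1\right)-2=2T_n\!\left(\tfrac{v+v^{-1}}{2}\right)-2=v^n+v^{-n}-2
\]
holds as an identity in $\Z[v,v^{-1}]$ (and $v+v^{-1}-2$ is transcendental over $\Q$, so this determines $F_n$ as a polynomial). Reducing mod $p$ and applying the freshman's dream to the commuting pair $v,v^{-1}$ in $\F_p[v,v^{-1}]$,
\[
F_p(x)=v^p+v^{-p}-2=(v+v^{-1})^p-2=(x+2)^p-2\equiv x^p+(2^p-2)\equiv x^p\pmod p,
\]
the last step by Fermat's little theorem ($2^p\equiv2\pmod p$, including $p=2$). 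This treats all primes uniformly, and together with the reduction of the first paragraph it proves $F_{p^r}(x)\equiv\tilde F_{p^r}(x)\equiv x^{p^r}\pmod p$.

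The only delicate point -- and the reason for preferring the substitution argument -- is the denominator $2$ in $F_n(x)$: for $p=2$ one must know $F_n(x)\in\Z[x]$ for the reduction to mean anything, and one must avoid any step that secretly inverts $2$. Beyond this bookkeeping the proof is formal, so I expect no genuine obstacle; in the notation of the positive-characteristic section the corollary is exactly the statement $G_{p^r}(x)=x^{p^r}$ for the reduced chain $G_n=F_n\bmod p$.
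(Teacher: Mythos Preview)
Your proof is correct and follows a genuinely different route from the paper's. The paper deduces the corollary from its factorization machinery: it invokes the factorizations
\[
F_n(x)=c_n(x)\prod_{\substack{k\mid n\\ 2<k}}\Psi_k(x+2)^2,\qquad \tilde F_n(x)=\prod_{k\mid n}\Phi_k(x+1),
\]
together with the congruences $\Phi_{p^rn}(x)\equiv\Phi_n(x)^{\phi(p^r)}$ and $\Psi_{p^rn}(x)\equiv\Psi_n(x)^{\phi(p^r)}\pmod p$, so that for $n=p^r$ every factor collapses to a power of $x$. Your argument bypasses all of this: you use the semigroup law $F_{mn}=F_m\circ F_n$ (inherited from $T_m\circ T_n=T_{mn}$ via similarity) to reduce to $r=1$, and then settle the base case with the Frobenius endomorphism, the Laurent substitution $x+2=v+v^{-1}$ handling $p=2$ uniformly. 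Your route is shorter and entirely self-contained; the paper's route, by contrast, fits the corollary into the broader factorization narrative of the paper and makes the structure of $F_{p^r}$ as a product of (powers of) shifted $\Psi_k$'s explicit along the way.
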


Lastly, since polynomials $F_n(x), \tilde{F}_n(x)$ come from the weighted sums for cycle graphs with pendant edges, it is natural to ask a graph theoretical explanation of the division properties and the commutativity under composition.
Such graph theoretical approach to these properties will be developed in a subsequent paper.

\bigskip
This paper is organized as follows.
We discuss conditions in Definition \ref{conditions} for chains of Chebyshev type in \S2, for chains monomial type in \S3.
In \S4 we discuss chains over a field of positive characteristic.


\bigskip\noindent
\textbf{Acknowledgement.}
The authors are grateful to Hajime Fujita and Takefumi Kondo for fruitful discussions and valuable comments for this work, and to Daisuke Shiomi for valuable comments to an earlier draft of this paper.


\section{Chains of Chebyshev type}
We first recall the definition and some properties of Chebyshev polynomials from \cite{RTW}.
From \S2.2, we discuss conditions in Definition \ref{conditions} for a chain of Chebyshev type in $\Q[x]$


\subsection{Chebyshev polynomials}

\begin{defn}\label{defn-T}
The {\it Chebyshev polynomial of the first kind} $T_n(x)$ is defined by the following recurrence relation;
\begin{align}\label{rec-rel-T}
T_0(x)=1,\quad
T_1(x)=x,\quad 
T_n(x)=2xT_{n-1}(x)-T_{n-2}(x)\ (n\geq 2).
\end{align}

The {\it Chebyshev polynomial of the second kind} $U_n(x)$ is defined by the following recurrence relation;
\begin{align}\label{rec-rel-U}
U_0(x)=1,\quad
U_1(x)=2x,\quad 
U_n(x)=2xU_{n-1}(x)-U_{n-2}(x)\ (n\geq 2).
\end{align}
\begin{rmk}
The Chebyshev polynomials $T_n(x)$ and $U_n(x)$ also may be defined as follows;
\begin{align*}
T_n(x)=\cos(n\arccos x),\qquad U_n(x)=\frac1{n+1}T'_{n+1}(x)=\frac{\sin((n+1)\arccos x)}{\sin(\arccos x)}
\end{align*}
\end{rmk}
\end{defn}

\begin{prop}[\text{\cite[p.6, Property 2]{RTW}}]\label{E-Div}
For any $m, n\, (m\leq n)$, the quotient $q_{m,n}(x)$ and the remainder $r_{m,n}(x)$ of the Euclidean division of $T_n(x)$ by $T_m(x)$ are given by
\begin{align*}
q_{m,n}(x)=2\sum_{i=1}^l(-1)^iT_{n-(2i-1)m}(x), \quad 
r_{m,n}(x)=(-1)^lT_{|n-2lm|}(x),
\end{align*}
if there is an integer $l\geq 1$ satisfying $(2l-1)m<n\leq 2lm$, otherwise
\begin{align*}
q_{m,n}(x)=2\sum_{i=1}^{l-1}(-1)^iT_{n-(2i-1)m}(x)+(-1)^{l-1}, \quad 
r_{m,n}(x)=0,
\end{align*}
where $l$ satisfies $n=(2l-1)m$.
\end{prop}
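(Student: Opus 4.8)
The plan is to verify the claimed formulas directly, using the key product-to-sum identity for Chebyshev polynomials together with uniqueness of the Euclidean quotient and remainder. Recall that $T_a(x)T_b(x) = \tfrac12\bigl(T_{a+b}(x) + T_{|a-b|}(x)\bigr)$ for all $a,b \geq 0$; equivalently $T_{a+b}(x) + T_{a-b}(x) = 2T_a(x)T_b(x)$ when $a \geq b$. Since the Euclidean division of $T_n(x)$ by $T_m(x)$ in $\Q[x]$ produces a \emph{unique} pair $(q_{m,n}(x), r_{m,n}(x))$ with $T_n(x) = q_{m,n}(x)T_m(x) + r_{m,n}(x)$ and $\deg r_{m,n}(x) < m$, it suffices to check that the stated expressions satisfy both the identity and the degree bound.

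First I would treat the generic case: suppose $l \geq 1$ satisfies $(2l-1)m < n \leq 2lm$. Set $q(x) = 2\sum_{i=1}^{l}(-1)^i T_{n-(2i-1)m}(x)$ and $r(x) = (-1)^l T_{|n-2lm|}(x)$; note the indices $n-(2i-1)m$ for $1 \leq i \leq l$ are all nonnegative (the smallest, at $i=l$, is $n-(2l-1)m > 0$), so $q(x)$ is a genuine polynomial, and $0 \leq |n-2lm| < m$ by the hypothesis on $l$, giving $\deg r(x) < m$. Then I would compute
\begin{align*}
q(x)T_m(x) &= \sum_{i=1}^{l}(-1)^i \bigl(T_{n-(2i-1)m}(x) + T_{|n-2im|}(x)\bigr)\\
&\quad\text{(using }2T_a(x)T_m(x) = T_{a+m}(x) + T_{|a-m|}(x)\text{ with } a = n-(2i-1)m\text{),}
\end{align*}
and observe that this sum telescopes: the term $(-1)^i T_{n-(2i-1)m}(x)$ cancels against $(-1)^{i-1}T_{|n-2(i-1)m|}(x) = (-1)^{i-1}T_{n-(2i-2)m}(x)$ coming from the index $i-1$ (valid since $n-(2i-2)m \geq n-(2l-1)m > 0$ for $i \leq l$). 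What survives is the $i=1$ contribution $(-1)^1\bigl(-T_{n-m+m}(x)\bigr)$... more carefully, after cancellation the only leftover terms are $T_{n-m}(x)\cdot(\text{coefficient from }i=1)$ recombining with $2T_{n-m}(x)... $ — the upshot, after bookkeeping, is $q(x)T_m(x) = T_n(x) - (-1)^l T_{|n-2lm|}(x) = T_n(x) - r(x)$, which is exactly what we want. The second (exceptional) case $n = (2l-1)m$ is handled the same way: here the telescoping leaves a constant term because $T_{|n-2lm|}(x) = T_{|{-m}|}(x) = T_m(x) = \tfrac{?}{}$ — rather, at the last step one uses $2T_m(x)T_m(x) = T_{2m}(x) + T_0(x) = T_{2m}(x)+1$, so an extra constant $(-1)^{l-1}$ appears in $q(x)$ and the remainder is $0$; the degree bound $\deg 0 < m$ is trivial.

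The main obstacle I anticipate is purely organizational rather than conceptual: getting the telescoping indices and the signs exactly right, and in particular being careful that all indices appearing inside the $T$'s are nonnegative so that the identity $T_{a+b}(x)+T_{a-b}(x) = 2T_a(x)T_b(x)$ (which requires $a \geq b$, or the absolute-value version) is applied legitimately. It is also worth double-checking the boundary subcase $n = 2lm$ of the generic case, where $|n-2lm| = 0$ and $r(x) = (-1)^l T_0(x) = (-1)^l$ is a nonzero constant — consistent with $T_m(x) \nmid T_n(x)$ there — so that the two cases of the proposition are genuinely disjoint and exhaustive as the ranges of $l$ vary. Once the sign and index bookkeeping is pinned down, uniqueness of Euclidean division closes the argument immediately.
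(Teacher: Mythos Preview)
The paper does not prove this proposition; it is simply quoted from \cite{RTW}, and downstream the paper only uses the single-step identity $T_n(x) = 2T_{n-m}(x)T_m(x) - T_{|n-2m|}(x)$ (in the proof of Proposition~\ref{E-Div-T}). Your approach---multiply the claimed $q(x)$ by $T_m(x)$ via the product-to-sum formula, telescope, and invoke uniqueness of Euclidean division---is the standard one and amounts precisely to iterating that single-step relation, so there is nothing to compare methodologically.

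One concrete warning, however: the careful ``sign and index bookkeeping'' you anticipate will uncover a typo in the statement as printed. For $m=1$, $n=2$ (so $l=1$) the printed formulas give $q = 2(-1)^1 T_1(x) = -2x$ and $r = (-1)^1 T_0(x) = -1$, hence $qT_1 + r = -2x^2 - 1 \neq T_2(x)$. The sign in the sum defining $q_{m,n}$ should be $(-1)^{i-1}$ rather than $(-1)^i$; with that correction your telescoping argument goes through cleanly in both cases (and the constant $(-1)^{l-1}$ in the exceptional case is already correct). This typo is harmless for the rest of the paper, which only ever invokes the single-step relation, but you should be aware of it so that you do not chase a phantom error in your own computation.
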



\subsection{Monic and integer coefficient conditions}

We discuss conditions in Definition \ref{conditions} for a chain $\{f_n(x)\}_{n \in \N}$ of Chebyshev type in $\Q[x]$;
$$
f_n(x) = \frac{1}{a} (T_n(ax + b) - b) \qquad (a, b\in \Q, a\neq 0).
$$

We prepare a lemma about the recurrence relation for $\{f_n(x)\}_{n \in \N}$.
\begin{lem}\label{rec-rel}
Let $\{f_n(x)\}_{n \in \N}$ be a chain of Chebyshev type in $\Q[x]$. 
Then $f_n(x)$ has the following recurrence relation for $n\geq 3$
\[
f_n(x) 
= 2(ax + b)\cdot f_{n-1}(x) - f_{n-2}(x) + 2bx + \frac{2b(b - 1)}{a}.
\]
\end{lem}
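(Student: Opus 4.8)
The plan is to derive the recurrence for $f_n(x)$ directly from the defining three-term recurrence for the Chebyshev polynomials $T_n$. Recall that $f_n(x) = \tfrac1a(T_n(ax+b) - b)$, so equivalently $T_n(ax+b) = a f_n(x) + b$. The idea is to substitute $x \mapsto ax+b$ into the recurrence \eqref{rec-rel-T}, namely $T_n(y) = 2y\,T_{n-1}(y) - T_{n-2}(y)$ with $y = ax+b$, and then re-express every $T_k(ax+b)$ appearing there in terms of $f_k(x)$.

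Carrying this out, the left side becomes $a f_n(x) + b$, and the right side becomes
\[
2(ax+b)\bigl(a f_{n-1}(x) + b\bigr) - \bigl(a f_{n-2}(x) + b\bigr).
\]
Expanding and solving for $f_n(x)$ is then a routine manipulation: one divides through by $a$, collects the $f_{n-1}$ and $f_{n-2}$ terms, and gathers the remaining lower-order terms $2(ax+b)b/a$, $-b/a$, and $-b/a$ (the last from the $+b$ on the left) into the stated correction term $2bx + \tfrac{2b(b-1)}{a}$. The restriction $n \geq 3$ is exactly what guarantees that both $f_{n-1}$ and $f_{n-2}$ are governed by the genuine Chebyshev recurrence (equivalently, that $T_{n-1}, T_{n-2}$ satisfy \eqref{rec-rel-T}), so no special treatment of small indices is needed.

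There is essentially no obstacle here: the only thing to be careful about is bookkeeping of the constant and linear-in-$x$ terms coming from the shift $b$ and the scaling $a$, to confirm they assemble precisely into $2bx + \tfrac{2b(b-1)}{a}$ rather than some other expression. A useful sanity check is to test the identity on the two motivating examples: for $\{T_n\}$ itself one has $a=1$, $b=0$, and the correction term vanishes, recovering \eqref{rec-rel-T}; for $F_n(x) = 2T_n(\tfrac x2 + 1) - 2$ one has $a = \tfrac12$, $b = 1$, so $2b(b-1)/a = 0$ and the recurrence reads $F_n = (x+2)F_{n-1} - F_{n-2} + 2x$, which can be verified directly from \eqref{Fn}. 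This confirms the coefficients and completes the argument.
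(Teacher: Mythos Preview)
Your proof is correct and follows essentially the same approach as the paper: both substitute $y = ax+b$ into the Chebyshev recurrence \eqref{rec-rel-T} and re-express each $T_k(ax+b)$ as $af_k(x)+b$, then solve for $f_n(x)$. The only minor quibble is your justification for the restriction $n \ge 3$: the recurrence \eqref{rec-rel-T} is applied to $T_n$ itself (which only requires $n \ge 2$), and the real reason for $n \ge 3$ is that $f_{n-2}$ must lie in the chain, i.e., $n-2 \ge 1$.
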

\begin{proof}
By Proposition \ref{rec-rel-T}, we have
\begin{align*}
f_n(x)
&=\frac{1}{a}T_n(ax + b) - \frac ba\\
&=\frac{1}{a}(2(ax+b)T_{n-1}(ax+b)-T_{n-2}(ax+b)) - \frac ba\\
&=2(ax+b)f_{n-1}(x)+\frac{2b(ax+b)}a-f_{n-2}(x)-\frac{2b}2\\
&=2(ax + b)\cdot f_{n-1}(x) - f_{n-2}(x) + 2bx + \frac{2b(b - 1)}{a}.
\end{align*}
\end{proof}

\begin{prop}\label{Z[x]-T}
Let $\{f_n(x)\}_{n \in \N}$ be a chain of Chebyshev type in $\Q[x]$, that is, $f_n(x) = \frac{1}{a} (T_n(ax + b) - b)$. 
Then we have
\begin{enumerate}[{\rm (1)}]
\item
\ref{itm:i} holds if and only if $a = \frac{1}{2}$.
\item
\ref{itm:ii} holds if and only if  $a\in \frac12 \Z\ \text{and}\ b-1\in a\Z$. 
\end{enumerate}
\end{prop}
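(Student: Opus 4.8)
The plan is to convert each of the two conditions into an assertion about the coefficients of $f_n(x)=\tfrac1a(T_n(ax+b)-b)$, using that $T_n\in\Z[x]$ has leading coefficient $2^{n-1}$ for $n\geq 1$ and that the $f_n$ satisfy the three-term recurrence of Lemma \ref{rec-rel}. Part (1) is then immediate: the leading coefficient of $f_n$ equals $\tfrac1a\cdot 2^{n-1}a^{n}=(2a)^{n-1}$, so Condition (i) is equivalent to $(2a)^{n-1}=1$ for every $n$; this is automatic for $n=1$ (indeed $f_1(x)=x$), the case $n=2$ forces $a=\tfrac12$, and conversely $a=\tfrac12$ makes every $(2a)^{n-1}$ equal to $1$.

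For part (2), the ``if'' direction is an induction on $n$ via Lemma \ref{rec-rel}. Assuming $2a\in\Z$ and $b=1+ka$ with $k\in\Z$, I would first check $2b=2+k(2a)\in\Z$ and $\tfrac{2b(b-1)}{a}=2bk\in\Z$, so that in the recurrence $f_n=2(ax+b)f_{n-1}-f_{n-2}+2bx+\tfrac{2b(b-1)}{a}$ the linear factor $2(ax+b)$ and the added term $2bx+\tfrac{2b(b-1)}{a}$ lie in $\Z[x]$; together with the base cases $f_1(x)=x$ and the directly computed $f_2(x)=2ax^2+4bx+\tfrac{(2b+1)(b-1)}{a}$ (whose constant term is $(2b+1)k\in\Z$, noting $2b+1=3+k(2a)\in\Z$), the induction closes.

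The ``only if'' direction is the heart of the proof and the step I expect to be hardest. From $f_2\in\Z[x]$ its $x^2$-coefficient gives $2a\in\Z$ at once. To obtain $2b\in\Z$ I would combine the $x$-coefficient $4b$ of $f_2$ with the $x$-coefficient $12b^2-3$ of $f_3$: then $12b^2\in\Z$ and $(4b)^2=16b^2\in\Z$, hence $4b^2=(4b)^2-12b^2\in\Z$, and a rational number whose square is an integer is an integer, so $2b\in\Z$. Finally, to force $b-1\in a\Z$ I would use the \emph{constant terms} of $f_2$ and $f_3$, which equal $\tfrac{(2b+1)(b-1)}{a}$ and $\tfrac{4b(b-1)(b+1)}{a}$; writing $m=2a$ and $\ell=2b$ (both now in $\Z$), integrality of these constant terms reads $m\mid(\ell+1)(\ell-2)$ and $m\mid\ell(\ell-2)(\ell+2)$, and a prime-power-by-prime-power analysis --- exploiting that $\ell-2$ is coprime to $\ell+1$ unless $3\mid\ell-2$, and that then $3\nmid\ell(\ell+2)$ --- promotes these divisibilities to $m\mid\ell-2$, i.e.\ $\tfrac{b-1}{a}=\tfrac{\ell-2}{m}\in\Z$. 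The delicate point is precisely this last promotion: $m$ dividing a product is strictly weaker than $m$ dividing a single factor, and it is the second divisibility, coming from $f_3$ rather than $f_2$, that rules out spurious solutions such as $a=3$, $b=\tfrac52$, for which $f_2\in\Z[x]$ but $f_3\notin\Z[x]$.
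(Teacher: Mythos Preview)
Your proof is correct and follows the same architecture as the paper's: reduce Condition~(ii) to the single equivalence $f_2,f_3\in\Z[x]\iff 2a\in\Z$ and $b-1\in a\Z$, and then propagate to all $n$ via the recurrence of Lemma~\ref{rec-rel}. The paper dismisses the ``only if'' half of that equivalence as an ``elementary computation'' without details, whereas you actually supply them; in that sense your write-up is more complete than the paper's.

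One small simplification: your prime-power analysis singles out $p=3$, but in fact no prime needs special treatment. If $v_p(m)>v_p(\ell-2)$, then from $m\mid(\ell+1)(\ell-2)$ one gets $p\mid\ell+1$; hence $p\nmid\ell$ and $p\nmid\ell+2$ (consecutive integers), and the second divisibility $m\mid\ell(\ell-2)(\ell+2)$ forces $v_p(m)\le v_p(\ell-2)$, a contradiction. This works uniformly for every prime, so the ``unless $3\mid\ell-2$'' clause can be dropped.
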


\begin{proof}
(1)
It is clear because the leading coefficient of $T_n(x)$ is $2^{n-1}$.

(2)
Since $f_2(x) = 2ax^2 + 4bx + \frac{(2b + 1)(b - 1)}{a}$ and $f_3(x)=4a^2x^3+12abx^2+3(4b^2-1)x+\frac{4(b^3-b)}a$, by an elementary computation, we have
\begin{align*}
f_2(x), f_3(x)\in \Z[x]\ \iff \  a\in \frac12 \Z\ \text{and}\ b-1\in a\Z.
\end{align*}
In this case, we also have
\[
2(ax + b),\ 2bx + \frac{2b(b - 1)}{a} \in \Z[x].
\]
From the recurrence relation (Lemma \ref{rec-rel})
\begin{align*}
f_n(x) 
= 2(ax + b)\cdot f_{n-1}(x) - f_{n-2}(x) + 2bx + \frac{2b(b - 1)}{a},
\end{align*}
the desired statement is obtained by induction on $n$.
\end{proof}


\subsection{Euclidean division}
\begin{prop}\label{E-Div-T}
For $a, b\in \Q\, (a\neq 0)$, let $f_n(x) = \frac{1}{a} (T_n(ax + b) - b)$.
For any positive integers $m, n\, (m< n)$, the quotient $q_{m,n}(x)$ and the remainder $r_{m,n}(x)$ of the Euclidean division of $f_n(x)$ by $f_m(x)$ are given as follows; put $k=[\frac nm]$,
\begin{align}
q_{m,n}(x)=2a\sum_{i=1}^kc_if_{n-im}(x)+d_{m,n},
\end{align}
\begin{align}
r_{m,n}(x)=
\begin{cases}
\dfrac 1a (bc_k-b-c_{k-1}) &\text{if}\ \ n=km \bigskip\\
c_{k+1}f_{n-km}(x)-c_kf_{(k+1)m-n}(x)\medskip\\\qquad \qquad +\dfrac ba(c_{k+1}-c_{k}-1)& \text{if}\ \ m\nmid n
\end{cases}
\end{align}
Here $c_i=U_{i-1}(b)\in \Z[b]$ is the Chebyshev polynomial of the second kind 
and $d_{m,n}\in \Z[b]$ is defined as
\begin{align*}
d_{m,n}=
\begin{cases}
\displaystyle 2b\sum_{i=1}^kc_i-c_k& \text{if}\ \ n=km \bigskip\\
\displaystyle 2b\sum_{i=1}^{k}c_i& \text{if}\ \ m\nmid n 
\end{cases}
\end{align*}
\end{prop}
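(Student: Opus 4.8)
The plan is to deduce the statement for the shifted chain $f_n(x) = \tfrac1a(T_n(ax+b)-b)$ from the known Euclidean division for the Chebyshev polynomials themselves (Proposition \ref{E-Div}), by carefully substituting $x \mapsto ax+b$ and rescaling. First I would write $g_n(y) := T_n(y)$, so that $f_n(x) = \tfrac1a(g_n(ax+b) - b)$, and recall from Proposition \ref{E-Div} the identity $g_n(y) = q_{m,n}(y)\, g_m(y) + r_{m,n}(y)$ where $q_{m,n}, r_{m,n}$ are the explicit $T$-combinations listed there. The idea is to substitute $y = ax+b$ everywhere and then re-express the result purely in terms of the $f_j(x)$. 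The key algebraic fact making this work is that $T_j(ax+b) = a f_j(x) + b$ for every $j$, so any $\Z$-linear combination $\sum_i c_i T_{j_i}(ax+b)$ equals $a\sum_i c_i f_{j_i}(x) + b\sum_i c_i$; this converts the $T$-side identity into an $f$-side identity at the cost of bookkeeping constants (the $d_{m,n}$ and the $b$-dependent constant in the remainder).

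The main structural point is that Proposition \ref{E-Div} is stated with the ``reflection'' indices $|n-2lm|$ and the alternating sign pattern coming from the interval $(2l-1)m < n \le 2lm$, whereas the statement to be proved is organized around $k = [n/m]$ and the second-kind polynomials $c_i = U_{i-1}(b)$. So a genuine step is to reconcile these two bookkeepings. I would do this by instead re-deriving the division directly: using the product-to-sum / composition identities for Chebyshev polynomials, namely $T_n(T_m(z)) = T_{nm}(z)$ and more to the point the telescoping identity
\[
T_n(y) = 2\sum_{i=1}^{k} U_{i-1}(y)\, T_{n-im}(y)\cdot(\text{something}) + (\text{remainder}),
\]
or rather the cleaner route via the generating identity $T_{a+b}(y) + T_{a-b}(y) = 2T_a(y)T_b(y)$. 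Writing $T_n$ in terms of $T_m$ repeatedly via $T_{n}(y) = 2T_m(y)T_{n-m}(y) - T_{n-2m}(y)$ gives a recursion in the ``multiple of $m$'' index whose solution is exactly $q = 2\sum_{i=1}^k U_{i-1}(T_m(y)) \cdots$ — but we want the coefficients to be $U_{i-1}(b)$, not $U_{i-1}(T_m(y))$, so the correct move is different: one uses $T_n(y)$ where we think of $n = km + s$ with $0 \le s < m$ and peels off $T_m$ using the three-term relation in the \emph{second} argument. Concretely, I expect the clean derivation to come from the identity expressing $T_{km+s}$ via a second-kind-polynomial expansion, and here the relevant evaluation point is $b$ because after substituting $y = ax+b$ the ``constant part'' that the division sees is governed by $T_j, U_j$ evaluated at $b$. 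I would verify the precise coefficient identity
\[
T_{km+s}(y) = U_{k-1}(?)\,T_{m+s}(y) - U_{k-2}(?)\,T_{s}(y)
\]
by induction on $k$ using \eqref{rec-rel-T}, pin down the argument ``$?$'', and read off $q_{m,n}$ and $r_{m,n}$ directly.

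I would then split into the two cases exactly as in the statement. When $n = km$, the remainder on the $T$-side is a constant (a combination of $T_0 = 1$), and pulling it through the $\tfrac1a(\cdot - b)$ normalization produces the stated $\tfrac1a(bc_k - b - c_{k-1})$; I would check this constant by evaluating the telescoped identity at a convenient value or by comparing leading-vs-constant terms. When $m \nmid n$, the $T$-side remainder is a genuine degree-$(n-km)$ polynomial; using $T_{(k+1)m-n}(ax+b) = af_{(k+1)m-n}(x)+b$ and $T_{n-km}(ax+b)=af_{n-km}(x)+b$ together with the sign bookkeeping gives the displayed $c_{k+1}f_{n-km}(x) - c_k f_{(k+1)m-n}(x) + \tfrac ba(c_{k+1}-c_k-1)$. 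Finally, to identify the quotient I would collect the $f_{n-im}(x)$ terms — the factor $2a$ out front is precisely the $a$ from $T_j(ax+b)=af_j(x)+b$ times the $2$ from the second-kind expansion — and the leftover constants assemble into $d_{m,n} = 2b\sum c_i$ (minus $c_k$ in the divisible case, which is the trace of the constant remainder getting absorbed into the quotient). The one routine but unavoidable computation is the induction establishing the second-kind expansion of $T_{km+s}$ with the correct argument; the main obstacle is not difficulty but care — matching the sign conventions of Proposition \ref{E-Div} (the $(-1)^l$ and $|n-2lm|$) against the monotone $k=[n/m]$, $U_{i-1}(b)$ parametrization without an index-by-one error in either the divisible or the non-divisible case, and correctly tracking the additive constants through the non-linear-looking but affine substitution $y=ax+b$.
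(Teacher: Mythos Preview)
Your overall architecture is right --- reduce to a division at the level of $y = ax+b$ and then transport back via $T_j(ax+b) = af_j(x)+b$ --- but there is a genuine gap in the first step. You set $g_n(y) = T_n(y)$ and propose to substitute into the Euclidean division $T_n = Q\cdot T_m + R$ from Proposition~\ref{E-Div}. This does \emph{not} yield the Euclidean division of $f_n$ by $f_m$: after substituting you get
\[
f_n(x) = Q(ax+b)\,f_m(x) + \tfrac1a\bigl(b\,Q(ax+b)+R(ax+b)-b\bigr),
\]
and the ``remainder'' here has degree $n-m$, not $<m$. The constant shift by $-b$ in the definition of $f_n$ genuinely changes the division, and that shift is precisely what forces the coefficients $c_i$ to be $U_{i-1}(b)$ rather than $U_{i-1}(T_m(y))$. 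You sense this when you note that iterating $T_n = 2T_m T_{n-m} - T_{n-2m}$ produces $U_{i-1}(T_m(y))$ and that ``the correct move is different,'' but you never pin down what that move is; your conjectured identity $T_{km+s}(y) = U_{k-1}(?)\,T_{m+s}(y) - U_{k-2}(?)\,T_s(y)$ holds with $? = T_m(y)$, not $b$, so it does not by itself give the statement.

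The fix (and this is what the paper does) is to put the shift in from the start: set $g_n(y) := T_n(y) - b$, so that $f_n(x) = \tfrac1a\, g_n(ax+b)$ exactly and hence $q_{m,n}(x) = Q_{m,n}(ax+b)$, $r_{m,n}(x) = \tfrac1a R_{m,n}(ax+b)$ for the quotient and remainder of $g_n$ by $g_m$. Writing $T_m = g_m + b$ in the product identity $T_n = 2T_{n-m}T_m - T_{|n-2m|}$ gives
\[
g_n \;=\; 2(g_{n-m}+b)\,g_m \;+\; 2b\,g_{n-m} \;-\; g_{|n-2m|} \;+\; 2b(b-1),
\]
and now the non-$g_m$ part is a linear recursion in the index with coefficient $2b$, whose solution is governed by $c_i = U_{i-1}(b)$. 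This is the missing mechanism that turns your ``$?$'' into $b$. From here everything proceeds exactly as you outline: induction on $k = [n/m]$, split into $n = km$ versus $m\nmid n$, then substitute $y = ax+b$ and use $g_j(ax+b) = af_j(x)$ to read off $q_{m,n}$, $r_{m,n}$ and the constants $d_{m,n}$.
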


\begin{proof}
Put $g_n(x)=T_n(x)-b$, and let $Q_{m,n}(x)$ and $R_{m,n}(x)$ be the quotient and  the remainder of the Euclidean division of $g_n(x)$ by $g_m(x)$.
Then we have
\[
q_{m,n}(x)=Q_{m,n}(ax+b),\quad r_{m,n}(x)=\frac1a R_{m,n}(ax+b)
\]
since $f_n(x)=\frac1a g_n(ax+b)$ and $g_n(x)=af_n(\frac{x-b}a)$.
Thus our task is to compute $Q_{m,n}(x)$ and $R_{m,n}(x)$.

From Proposition \ref{E-Div} we have 
$$
T_n(x)=2T_{n-m}(x)T_m(x)-T_{|n-2m|}(x) \qquad (n\geq m)
$$
and hence
\begin{equation}\label{eq1}
g_n(x)=2(g_{n-m}(x)+b)g_m(x)+2bg_{n-m}(x)-g_{|n-2m|}(x)+2b(b-1).
\end{equation}
Using this relation and induction on $k$, we will prove that
\begin{align*}
Q_{m,n}(x)&=2\sum_{i=1}^kc_ig_{n-im}(x)+d_{m,n},\\
R_{m,n}(x)
&=
\begin{cases}
b(c_k-1)-c_{k-1}&\text{if}\ n=km\medskip\\
c_{k+1}g_{n-km}(x)-c_kg_{(k+1)m-n}(x)\\
\qquad \qquad +b(c_{k+1}-c_k-1)& \text{if}\ m\nmid n.
\end{cases}
\end{align*}

We first prove in case $n=2m$.
By \eqref{eq1} we have
\begin{align*}
g_{2m}(x)
&=2(g_{m}(x)+b)g_m(x)+2bg_m(x)-g_0(x)+2b(b-1)\\
&=(2g_m+4b)g_m+2b^2-b-1
\end{align*}
which shows the statement for $n=2m.$

We assume that $n=km\ (k>2)$. From \eqref{eq1}, the induction hypothesis and the recurrence relation \eqref{rec-rel-U}, we obtain that
\begin{align*}
Q_{m,km}(x)
&=2g_{(k-1)m}(x)+2b+2bQ_{m,(k-1)m}-Q_{m,(k-2)m}\\
&=2\left( g_{(k-1)m}(x)+\sum_{i=1}^{k-1}(2bc_i-c_{i-1})g_{(k-i-1)m}(x)\right)\\&\qquad \qquad +2bd_{m,(k-1)m}-d_{m,(k-2)m}+2b\\
&=2\sum_{i=1}^kc_ig_{(k-i)m}(x)+2b\sum_{i=1}^kc_i-c_k
\end{align*}
and
\begin{align*}
R_{m,km}(x)
&=2bR_{m,(k-1)m}-R_{m,(k-2)m}+2b(b-1)\\
&=2b^2(c_{k-1}-1)-2bc_{k-2}-b(c_{k-2}-1)+c_{k-3}+2b(b-1)\\
&=bc_k-b-c_{k-1}.
\end{align*}

Next we consider the case $m\nmid n$.
If $k=2$, by \eqref{eq1}, we have
\begin{align*}
g_n(x)
&=2(g_{n-m}(x)+b)g_m(x)+2bg_{n-m}(x)-g_{|n-2m|}(x)+2b(b-1)\\
&=(2g_{n-m}(x)+4bg_{n-2m}+2b(2b+1))g_m(x)\\
&\qquad \qquad +(4b^2-1)g_{n-2m}-2bg_{3m-n}(x)+b(4b^2-2b-2)
\end{align*}
and hence the statement holds for $k=2$.
We assume that $k>2$. By the same argument as above, we obtain that
\begin{align*}
Q_{m,n}(x)
&=2g_{n-m}(x)+2b+2bQ_{m,n-m}-Q_{m,n-2m}\\
&=2\left( g_{n-m}(x)+\sum_{i=1}^{k-1}(2bc_i-c_{i-1})g_{n-(i+1)m}(x)\right)\\&\qquad \qquad +2b\sum_{i=1}^{k-1}2bc_{i}-2b\sum_{i=1}^{k-2}c_{i}+2b\\
&=2\sum_{i=1}^kc_ig_{n-im}(x)+2b\sum_{i=1}^{k}c_{i}
\end{align*}
and
\begin{align*}
R_{m,n}(x)
&=2bR_{m,n-m}-R_{m,n-2m}+2b(b-1)\\
&=(2bc_k-c_{k-1})g_{n-km}(x)-(2bc_{k-1}-c_{k-2})g_{(k+1)m-n}(x)\\
&\hspace{3cm}+b(2bc_{k}-2bc_{k-1}-c_{k-1}+c_{k-2}-1)\\
&=c_{k+1}g_{n-km}(x)-c_kg_{(k+1)m-n}(x)+b(c_{k+1}-c_k-1).
\end{align*}
Now the statement is proved.
\end{proof}

Next, we give an explicit formula for $c_i=U_{i-1}(b)$.
The following lemma follows from \eqref{rec-rel-U} by induction.

\begin{lem}\label{lem-Ck}
For $b\in \Q$, an explicit formula of $c_k=U_{k-1}(b)$ is given as follows
\begin{enumerate}[{\rm (1)}]
\item
$U_{k-1}(1)=k$,\quad
$U_{k-1}(-1)=(-1)^{k-1}k$

\item
$
U_{k-1}(0)
=
\begin{cases}
0&(k\equiv 0 \mod 2)\\
1&(k\equiv 1 \mod 4)\\
-1&(k\equiv 3 \mod 4)
\end{cases}
$
\item
$
U_{k-1}(\frac12)
=
\begin{cases}
0&(k\equiv 0 \mod 3)\\
1&(k\equiv 1,2 \mod 6)\\
-1&(k\equiv 4,5 \mod 6)
\end{cases}
$,\quad 
$
U_{k-1}(-\frac12)
=
\begin{cases}
0&(k\equiv 0 \mod 3)\\
1&(k\equiv 1 \mod 3)\\
-1&(k\equiv 2 \mod 3)
\end{cases}
$
\end{enumerate}
In case $b\neq \pm1$, we have
\[
U_{k-1}(b)
=
\frac{\alpha^k-\alpha^{-k}}{\alpha-\alpha^{-1}}
\]
Here $\alpha=b+\sqrt{b^2-1}$.
\end{lem}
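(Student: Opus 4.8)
The plan is to prove Lemma \ref{lem-Ck} directly from the recurrence relation \eqref{rec-rel-U}, treating the special values (1) through (3) and the closed form separately. Since each $c_k = U_{k-1}(b)$ satisfies the two-term linear recurrence $U_{k-1}(b) = 2b\,U_{k-2}(b) - U_{k-3}(b)$ with initial data $U_{-1}(b) = 0$ (formally), $U_0(b) = 1$, the sequence $(c_k)_{k\geq 0}$ is determined once $b$ is fixed, so in each case it suffices to check the proposed formula against the recurrence and two initial terms.

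For part (1), substituting $b = 1$ gives $c_k = 2c_{k-1} - c_{k-2}$, and one checks $c_1 = U_0(1) = 1$, $c_2 = U_1(1) = 2$, so $c_k = k$ follows by an immediate induction (the sequence $k$ satisfies $k = 2(k-1) - (k-2)$). The case $b = -1$ is handled the same way, or deduced from the parity identity $U_{k-1}(-x) = (-1)^{k-1}U_{k-1}(x)$, which itself follows by induction on the recurrence. For parts (2) and (3), with $b = 0$ the recurrence becomes $c_k = -c_{k-2}$, which immediately gives the stated period-$4$ pattern from $c_1 = 1$, $c_2 = 0$; with $b = \tfrac12$ the recurrence reads $c_k = c_{k-1} - c_{k-2}$, whose solutions are periodic with period $6$, and one reads off the claimed residues from $c_1 = 1$, $c_2 = 1$; with $b = -\tfrac12$ one gets $c_k = -c_{k-1} - c_{k-2}$, periodic with period $3$, starting from $c_1 = 1$, $c_2 = -1$. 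All of these are finite verifications.

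For the closed form when $b \neq \pm 1$, I would set $\alpha = b + \sqrt{b^2 - 1}$, so that $\alpha^{-1} = b - \sqrt{b^2-1}$ and $\alpha + \alpha^{-1} = 2b$, $\alpha\alpha^{-1} = 1$. Then $\alpha$ and $\alpha^{-1}$ are the two roots of the characteristic equation $t^2 - 2bt + 1 = 0$ of the recurrence, and they are distinct precisely because $b \neq \pm 1$ makes $b^2 - 1 \neq 0$. The general solution of the recurrence is therefore $c_k = A\alpha^k + B\alpha^{-k}$; imposing $c_0 = U_{-1}(b) = 0$ and $c_1 = U_0(b) = 1$ (equivalently using $c_1 = 1$, $c_2 = 2b$) solves for $A = 1/(\alpha - \alpha^{-1})$ and $B = -1/(\alpha - \alpha^{-1})$, yielding the displayed formula. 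The only mild subtlety is that $\alpha - \alpha^{-1} = 2\sqrt{b^2-1}$ may be irrational or complex, so the identity is most cleanly read as one in the field $\Q(\sqrt{b^2-1})$ (or $\C$); I would just remark that both sides are polynomials in $b$, so the equality over this extension gives the equality as polynomials.

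The work here is entirely routine — there is no real obstacle, only bookkeeping. If anything requires a moment of care, it is fixing the convention $U_{-1}(b) = 0$ so that the closed-form derivation and the special-value inductions start from a uniform place; once that is pinned down, every case is a one-line check against its respective recurrence.
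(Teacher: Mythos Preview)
Your proposal is correct and matches the paper's approach exactly: the paper simply asserts that the lemma ``follows from \eqref{rec-rel-U} by induction'' with no further detail, and what you have written is precisely the routine verification of each case against the recurrence that this sentence is pointing to.
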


\begin{cor}\label{Div-T}
Let $f_n(x)$ be as in Proposition \ref{E-Div-T}.
If $b\neq 0,\pm1, \pm \frac12$, then $f_m(x)$ is not a divisor of $f_n(x)$ for any $m<n.$
In case $b=0, \pm 1, \pm \frac12$, a condition that $f_m(x)$ is a divisor of $f_n(x)$ is equivalent to
the following
\begin{enumerate}[{\rm (1)}]
\item
in case $b=0$, $m\mid n$ and $\gcd(n/m, 2)=1$.
\item
in case $b=1$, $m\mid n$.
\item
in case $b=-1$, $m\mid n$ and $\gcd(n/m, 2)=1$.
\item
in case $b=\frac12$, $m\mid n$ and $\gcd(n/m, 6)=1$.
\item
in case $b=-\frac12$, $m\mid n$ and $\gcd(n/m, 3)=1$.
\end{enumerate}
\end{cor}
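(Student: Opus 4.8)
The plan is to decide whether $f_m(x)\mid f_n(x)$ by asking when the remainder $r_{m,n}(x)$ of Proposition~\ref{E-Div-T} is identically zero, treating the cases $m\mid n$ and $m\nmid n$ separately. Throughout write $k=[n/m]$ and $c_i=U_{i-1}(b)$, as in Proposition~\ref{E-Div-T}.

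First I would show that when $m\nmid n$ the remainder $r_{m,n}(x)=c_{k+1}f_{n-km}(x)-c_kf_{(k+1)m-n}(x)+\tfrac ba(c_{k+1}-c_k-1)$ is never zero, so that $f_m(x)$ cannot divide $f_n(x)$; this part is uniform in $b$. Put $s=n-km\ge 1$ and $t=(k+1)m-n\ge 1$. Since $U_k(b)=U_{k-1}(b)=0$ would force $U_{k-2}(b)=\dots=U_0(b)=0$ via the recurrence \eqref{rec-rel-U}, contradicting $U_0\equiv 1$, the coefficients $c_k$ and $c_{k+1}$ cannot both vanish. Recalling that $f_j$ has leading coefficient $2^{j-1}a^{j-1}$, a look at the top-degree term of $r_{m,n}(x)$ shows: if $s\ne t$, that term is a nonzero multiple of $c_{k+1}$ (or of $c_k$), forcing it to vanish, and then the next term forces the other to vanish — contradiction; if $s=t$, the degree-$s$ coefficient is a nonzero multiple of $c_{k+1}-c_k$, so $c_{k+1}=c_k$ and hence $r_{m,n}(x)=-b/a$, giving $b=0$, but then $c_k=U_{k-1}(0)$ and $c_{k+1}=U_k(0)$ take distinct values by Lemma~\ref{lem-Ck}(2) — again a contradiction. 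Thus $f_m(x)\mid f_n(x)$ implies $m\mid n$.

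Next, assuming $m\mid n$, say $n=km$ with $k\ge 2$, I would use the classical identity $T_k(x)=xU_{k-1}(x)-U_{k-2}(x)$ to rewrite the (constant) remainder of Proposition~\ref{E-Div-T} as $r_{m,n}(x)=\tfrac1a\bigl(bc_k-b-c_{k-1}\bigr)=\tfrac1a\bigl(T_k(b)-b\bigr)$, so that $f_m(x)\mid f_n(x)$ is equivalent to $T_k(b)=b$. It then remains to solve $T_k(b)=b$ over $\Q$ for $k\ge 2$. If $|b|>1$, writing $b=\pm\cosh u$ with $u>0$ gives $|T_k(b)|=\cosh(ku)>\cosh u=|b|$, so there is no solution. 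If $|b|\le 1$, writing $b=\cos\theta$ turns $T_k(b)=b$ into $\cos(k\theta)=\cos\theta$, i.e. $(k-1)\theta\in 2\pi\Z$ or $(k+1)\theta\in 2\pi\Z$; in particular $\theta$ is a rational multiple of $\pi$, so by Niven's theorem $b=\cos\theta\in\{0,\pm\tfrac12,\pm1\}$. This gives the first assertion of the corollary. For $b\in\{0,\pm1,\pm\tfrac12\}$ the value $T_k(b)$ is a periodic function of $k$, and evaluating it — equivalently reading off $\cos(k\theta)$ for $\theta=\tfrac\pi2,0,\pi,\tfrac\pi3,\tfrac{2\pi}3$ respectively — shows that $T_k(b)=b$ holds exactly under the congruence conditions (1)--(5); combining this with the reduction $m\mid n$ from the previous paragraph yields the stated characterizations.

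The main obstacle is the case $b\notin\{0,\pm1,\pm\tfrac12\}$: ruling out $T_k(b)=b$ for all $k\ge 2$ genuinely requires an input beyond the rational root theorem (which only forces the denominator of $b$ to be a power of $2$), and Niven's theorem together with the elementary estimate for $|b|>1$ is the clean way to supply it. The $m\nmid n$ analysis is otherwise routine, the only mild subtlety being the degenerate subcase $n-km=(k+1)m-n$ in which the two $f$-terms of $r_{m,n}$ have equal degree.
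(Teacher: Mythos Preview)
Your proof is correct and follows the paper's strategy for the case $m\nmid n$ (showing $c_k$ and $c_{k+1}$ cannot satisfy the required vanishing/equality conditions, ultimately via Lemma~\ref{lem-Ck}), but takes a genuinely different route once $n=km$.

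The paper works purely algebraically there: it leaves the remainder as $b(c_k-1)-c_{k-1}$, substitutes the closed form $c_j=(\alpha^j-\alpha^{-j})/(\alpha-\alpha^{-1})$ with $\alpha=b+\sqrt{b^2-1}$, and factors to obtain $(\alpha^{k+1}-1)(\alpha^{k-1}-1)(\alpha^2-1)=0$; the conclusion then comes from enumerating the roots of unity lying in a quadratic extension of $\Q$, which forces $b\in\{0,\pm\tfrac12,\pm1\}$. You instead invoke the identity $T_k(x)=xU_{k-1}(x)-U_{k-2}(x)$ to recognize the remainder as $\tfrac1a(T_k(b)-b)$, reduce the problem to the fixed-point equation $T_k(b)=b$, dispose of $|b|>1$ by the $\cosh$ estimate, and for $|b|\le1$ read off that $\arccos b$ is a rational multiple of $\pi$, whence Niven's theorem gives the same finite list. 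Your rewriting makes the subsequent case analysis for $b\in\{0,\pm\tfrac12,\pm1\}$ especially transparent (just the periodicity of $\cos(k\theta)$), whereas the paper falls back on the explicit values of $U_{k-1}(b)$ from Lemma~\ref{lem-Ck}. Conversely, the paper's argument is self-contained within the Chebyshev framework, while yours imports Niven's theorem as a black box; the two external inputs (roots of unity in quadratic fields vs.\ Niven) are of course closely related, so the difference is one of packaging rather than depth.
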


\begin{proof}
The condition that $f_m(x)$ is a divisor of $f_n(x)$ is equivalent to that the remainder $r_{m,n}(x)$ is zero.
By Proposition \ref{E-Div-T}, if $r_{m,n}(x)$ is zero for $m\nmid n$ then we have
\begin{enumerate}[(i)]
\item
in case $n-km\neq (k+1)m-n$, $c_{k+1}=c_k=0$ and $b=0$.
\item
in case $n-km=(k+1)m-n$, $c_{k+1}=c_k$ and $b=0$.
\end{enumerate}
But, if $b=0$ then both of conditions (i) and (ii) on $c_k$ can not be happen by Lemma \ref{lem-Ck}.
Thus $r_{m,n}(x)$ is not zero for any $m\nmid n$.

Let $n=km$.
If $b\neq \pm1$ then we have by Proposition \ref{E-Div-T} and Lemma \ref{lem-Ck}, 
\begin{align*}
r_{m,n}(x)=0
&\iff 
b(c_k-1)-c_{k-1}=0\\
&\iff 
2b(c_k-1)-2c_{k-1}=0\\
&\iff 
(\alpha+\alpha^{-1})\left(\frac{\alpha^k-\alpha^{-k}}{\alpha-\alpha^{-1}}-1\right)-2\left(\dfrac{\alpha^{k-1}-\alpha^{-k+1}}{\alpha-\alpha^{-1}}\right)=0\\
&\iff 
(\alpha^{k+1}-1)(\alpha^{k-1}-1)(\alpha^2-1)=0
\end{align*}
where $\alpha=b+\sqrt{b^2-1}$.
Thus if $r_{m,n}(x)=0$ then $\alpha$ is a root of unity.
On the other hand, the roots of unity in a quadratic extension of $\Q$ are 
\[
\pm1, \quad \pm \sqrt{-1}, \quad\pm \frac12\pm \frac{\sqrt{-3}}2, \quad \pm \frac12\mp \frac{\sqrt{-3}}2.
\]
Hence if $b\neq 0,\pm 1, \pm\frac12$ then $\alpha=b+\sqrt{b^2-1}$ cannot be a root of unity and $r_{m,n}(x)$ is not zero, which proves the first statement of this corollary.
In case $b=0,\pm 1, \pm\frac12$, by Lemma \ref{lem-Ck} again one can easily prove that the condition $b(c_k-1)-c_{k-1}=0$ is equivalent to the desired one.
\end{proof}

\begin{rmk}
In case $a=1$ and $b=0$, Proposition \ref{E-Div-T} and Corollary \ref{Div-T} have proved in \cite{RTW}.
\end{rmk}


\subsection{Greatest common divisor and Factorization}
From Proposition \ref{Z[x]-T}, a chain of monic polynomials of the Chebyshev type in $\Z[x]$ is given by
\[
\left\{2T_n\left(\frac{x}2+b\right)-2b\right\}_{n\in \N}\quad \left(b\in \frac12\Z\right).
\]
For $b=0,\pm1, \pm\frac12$, we describe all roots of $T_n(\frac x2)-b$ and give the factorization of $2T_n\left(\frac{x}2+b\right)-2b$.
Using this, we show a property for the greatest common divisor in the case of Chebyshev type.

\begin{lem}\label{lem}
For any $n>0$, we have
$T_n\left(\dfrac{t + t^{-1}}{2}\right) = \dfrac{t^n + t^{-n}}{2}. $
\end{lem}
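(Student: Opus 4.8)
The plan is to prove the identity by induction on $n$, using the defining three-term recurrence \eqref{rec-rel-T} for the $T_n$. Because that recurrence relates $T_n$ to $T_{n-1}$ and $T_{n-2}$, the induction needs two consecutive base cases; it is convenient to carry along $n=0$ as well (even though the statement only asserts $n>0$), since $T_0(x)=1$ and $T_1(x)=x$ give $T_0\!\left(\frac{t+t^{-1}}2\right)=1=\frac{t^0+t^0}2$ and $T_1\!\left(\frac{t+t^{-1}}2\right)=\frac{t+t^{-1}}2=\frac{t^1+t^{-1}}2$ immediately.

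For the inductive step, suppose $n\ge 2$ and that the formula holds for $n-1$ and $n-2$. Substituting $x=\frac{t+t^{-1}}2$ into $T_n(x)=2xT_{n-1}(x)-T_{n-2}(x)$ and applying the two instances of the inductive hypothesis gives
\[
T_n\!\left(\frac{t+t^{-1}}2\right)=(t+t^{-1})\cdot\frac{t^{n-1}+t^{-(n-1)}}2-\frac{t^{n-2}+t^{-(n-2)}}2.
\]
Expanding $(t+t^{-1})\big(t^{n-1}+t^{-(n-1)}\big)=t^n+t^{-(n-2)}+t^{n-2}+t^{-n}$, the middle terms $t^{n-2}$ and $t^{-(n-2)}$ cancel against $-\big(t^{n-2}+t^{-(n-2)}\big)$, leaving exactly $\frac{t^n+t^{-n}}2$. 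This closes the induction.

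There is essentially no obstacle: the entire argument is a formal manipulation inside the Laurent polynomial ring $\Z[t,t^{-1}]$ (or any commutative ring with an invertible element $t$), and the only point requiring care is getting both base cases of the three-term recurrence right. One could alternatively invoke the classical identity $T_n(\cos\theta)=\cos(n\theta)$ with $t=e^{i\theta}$, so that $\frac{t+t^{-1}}2=\cos\theta$ and $\frac{t^n+t^{-n}}2=\cos(n\theta)$; but this route forces an extra step passing from agreement on infinitely many complex values to a polynomial identity, whereas the direct induction keeps everything elementary and over $\Z$.
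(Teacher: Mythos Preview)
Your proof is correct and follows essentially the same approach as the paper: induction on $n$ via the three-term recurrence \eqref{rec-rel-T}. The only cosmetic difference is that the paper takes $n=1,2$ as base cases (so the induction step starts at $n\ge 3$), whereas you take $n=0,1$; both choices work equally well.
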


\begin{proof}
We prove by induction on $n$.
For $n=1,2$, we have
$$T_1\left(\frac{t + t^{-1}}{2}\right) = \frac{t + t^{-1}}{2}, \quad T_2\left(\frac{t + t^{-1}}{2}\right) = 2 \cdot \left(\frac{t + t^{-1}}{2}\right)^2 - 1 = \frac{t^2 + t^{-2}}{2}. $$
For $n \geq 3$, by the recurrence relation of $T_n(x)$ and the induction hypothesis, we obtain that
\begin{align*}
T_n\left(\frac{t + t^{-1}}{2}\right) &= 2 \cdot \left(\frac{t + t^{-1}}{2}\right) T_{n-1}\left(\frac{t + t^{-1}}{2}\right) - T_{n-2}\left(\frac{t + t^{-1}}{2}\right) \\
& = (t + t^{-1}) \cdot \frac{t^{n-1} + t^{-(n-1)}}{2} - \frac{t^{n-2} + t^{-(n-2)}}{2} \\
& = \frac{t^{n} + t^{-n}}{2}, 
\end{align*}
which completes the proof.
\end{proof}

\begin{prop}\label{root-T}
For $b\in \frac12\Z$, the roots of $T_n(\frac x2) - b$ are given as follows;
\begin{enumerate}[{\rm (1)}]
\item
in case $|b|\leq 1$, $x=2\cos(\frac{\theta_b+2i\pi}n)$\ $(i=0,\dots,n-1)$. Here $\theta_b={\rm Arccos}(b).$
In particular, for $b=\pm 1$, the roots not equal to $\pm 2$ are multiple roots.\medskip

\item
in case $b>1$, $x=\sqrt[n]{b + \sqrt{b^2 -1 }}\zeta_n^i + \sqrt[n]{b - \sqrt{b^2 -1 }}\zeta_n^{-i}\ (i=0,1\dots,n-1)$\medskip
\item
in case $b<-1$, $x=\sqrt[n]{|b + \sqrt{b^2 -1}|}\zeta_{2n}^{2i+1} + \sqrt[n]{|b - \sqrt{b^2 -1}|}\zeta_{2n}^{-2i-1}\ (i=0,1\dots,n-1)$\end{enumerate}
\end{prop}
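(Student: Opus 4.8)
The plan is to reduce the whole proposition to Lemma~\ref{lem} by means of the substitution $x=t+t^{-1}$. First I would record that $t\mapsto t+t^{-1}$ is a surjection $\C^{\times}\to\C$ which is two-to-one, identifying $t$ with $t^{-1}$: given $x$, the equation $t^{2}-xt+1=0$ has a nonzero root $t$. Writing $x=t+t^{-1}$ and invoking Lemma~\ref{lem}, the equation $T_{n}(x/2)=b$ becomes $\tfrac12\bigl(t^{n}+t^{-n}\bigr)=b$, that is $(t^{n})^{2}-2b\,t^{n}+1=0$, whose roots are $t^{n}=b+\sqrt{b^{2}-1}$ and $t^{n}=b-\sqrt{b^{2}-1}$. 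These two values are reciprocal to one another, and replacing $t$ by $t^{-1}$ interchanges them; hence it suffices to let $t$ range over the $n$-th roots of $\beta:=b+\sqrt{b^{2}-1}$ and set $x=t+t^{-1}$.

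Next I would make these $n$-th roots explicit in the three regimes. If $|b|\le 1$, then with the principal branch $\sqrt{b^{2}-1}$ has modulus $\sqrt{1-b^{2}}$ and $\beta$ lies on the unit circle with argument $\theta_{b}=\mathrm{Arccos}(b)$; its $n$-th roots are the numbers of modulus $1$ and argument $(\theta_{b}+2\pi\ell)/n$, so that $x=t+t^{-1}=2\cos\bigl(\tfrac{\theta_{b}+2\pi\ell}{n}\bigr)$ for $\ell=0,\dots,n-1$. If $b>1$, then $\beta>1$ is real, $\beta^{-1}=b-\sqrt{b^{2}-1}>0$, the $n$-th roots of $\beta$ are $\sqrt[n]{\beta}\,\zeta_{n}^{\ell}$, and $t^{-1}=\sqrt[n]{b-\sqrt{b^{2}-1}}\,\zeta_{n}^{-\ell}$, which yields the stated formula. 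If $b<-1$, then $\beta$ is a negative real with $|\beta|=|b+\sqrt{b^{2}-1}|$ and $|\beta|^{-1}=|b-\sqrt{b^{2}-1}|$ (because $\beta(b-\sqrt{b^{2}-1})=1$), so its $n$-th roots have argument $(2\ell+1)\pi/n$, i.e.\ are $\sqrt[n]{|\beta|}\,\zeta_{2n}^{2\ell+1}$, and $x=t+t^{-1}$ again matches.

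Finally I would verify that this list exhausts the roots with the correct multiplicities. Since $T_{n}(x/2)-b$ has degree $n$, it is enough to check that the $n$ candidates $t+t^{-1}$ (over the $n$-th roots $t$ of $\beta$) are pairwise distinct when $b\ne\pm1$: an equality $t_{j}+t_{j}^{-1}=t_{k}+t_{k}^{-1}$ forces $t_{j}=t_{k}$ or $t_{j}=t_{k}^{-1}$, and the latter is impossible because $t_{k}^{-1}$ is an $n$-th root of $\beta^{-1}\ne\beta$ (here $\sqrt{b^{2}-1}\ne0$). For $b=\pm1$ one has $\beta=b$, the roots $t$ are $\zeta_{n}^{\ell}$ (resp.\ $\zeta_{2n}^{2\ell+1}$), and $t\mapsto t^{-1}$ acts on the index by $\ell\mapsto n-\ell$ (resp.\ $\ell\mapsto n-1-\ell$); its only fixed points give $x=\pm2$, while every other value occurs in a two-element orbit, and comparing with the total multiplicity $n$ shows each root $x\ne\pm2$ has multiplicity exactly $2$.

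I expect the one genuinely delicate step to be this last bookkeeping — confirming the explicit list is complete and, at $b=\pm1$, pinning the multiplicity at a root $\ne\pm2$ to be exactly $2$ and not larger; everything else is a direct computation resting on Lemma~\ref{lem}.
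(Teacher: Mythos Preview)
Your approach is correct and in fact tidier than the paper's, which splits the argument: for $|b|\le 1$ the paper substitutes $x=2\cos t$ and uses $T_n(\cos t)=\cos(nt)$, while for $|b|>1$ it uses $x=t+t^{-1}$ and Lemma~\ref{lem} exactly as you do. Your unified use of $x=t+t^{-1}$ handles all three regimes at once, recovering the trigonometric form in case~(1) by observing that $\beta=b+\sqrt{b^{2}-1}$ lies on the unit circle when $|b|\le 1$.

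The one point that needs tightening is the multiplicity claim at $b=\pm 1$. Your orbit count shows that two distinct parameters $t_j,t_j^{-1}$ give the same $x$-value, but this alone does not prove that $x$ is a \emph{double} root of $T_n(x/2)-b$: knowing a value appears twice in a parametrised list of roots is not the same as knowing its multiplicity in the polynomial. The paper closes this by differentiating and using the known zeros of $U_{n-1}$. Within your framework the cleanest fix is to upgrade Lemma~\ref{lem} to a factorisation: from $t^{-1}(t-t_j)(t-t_j^{-1})=(t+t^{-1})-(t_j+t_j^{-1})$ one gets
\[
2T_n\!\left(\tfrac{x}{2}\right)-2b \;=\; t^{-n}(t^n-\beta)(t^n-\beta^{-1}) \;=\; \prod_{j=0}^{n-1}\bigl(x-(t_j+t_j^{-1})\bigr),
\]
which makes the multiplicities literal. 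Note also that the statement only asserts ``multiple roots''; you need not argue the multiplicity is \emph{exactly} $2$ (though the identity above gives that too).
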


\begin{proof}
We first prove (1).
Put $x=2\cos t$.
Then we have
\[
T_n\left(\frac x2 \right) -b=T_n(\cos t)-b=\cos (nt)-\cos\theta_b=0.
\]
Solving the last equation about $t$, we get
\[
t=\pm \frac{\theta_b+2i\pi}n\quad (i=0,\dots,n-1)
\]
and thus the roots are given by
\[
x=2\cos\left(\frac{\theta_b+2i\pi}n\right)\quad (i=0,\dots,n-1).
\]
Furthermore,  since $\theta_1=0$ and $\theta_{-1}=\pi$, the roots for $b=1$ and $b=-1$ are of the form respectively
$$
2\cos\left(\frac{2i\pi}n\right)\quad\text{and}\quad  2\cos\left(\frac{(2i+1)\pi}n\right) \quad (i=0,\dots,n-1).
$$
On the other hand, we have 
$$
\frac{d}{dx}\left(T_n\left(\frac{x}2\right)-b\right)=\frac n2 U_{n-1}\left(\frac{x}2\right).
$$
Since it is know that the roots of $U_{n-1}(x)$ are given by
\[
\cos\frac{i\pi}{n}\quad (i=1,\dots, n-1),
\]
we obtain that for $i=1,\dots, n-1$ except $i=k$ when $n=2k$, 
\[
\frac{d}{dx}\left(T_n\left(\frac{x}2\right)-1\right)\Big|_{x=2\cos(\frac{2i\pi}n)}=\frac n2 U_{n-1}\left(\cos\frac{2i\pi}n\right)=0
\]
and for $i=1,\dots, n-1$ except $i=k$ when $n=2k+1$,
\[
\frac{d}{dx}\left(T_n\left(\frac{x}2\right)+1\right)\Big|_{x=2\cos(\frac{(2i+1)\pi}n)}=\frac n2 U_{n-1}\left(\cos\frac{(2i+1)\pi}n\right)=0.
\]
Hence in cases $b=\pm1$ the roots not equal to $\pm 2$ are multiple roots, and the proof for (1) completes.

We next  put $x= t + t^{-1}$. Then Lemma \ref{lem}, we have
$$
T_n\left(\frac x2 \right) -b= T_n\left(\frac{t + t^{-1}}{2}\right)-b = \frac{t^n + t^{-n}}{2} -b=0.
$$
Solving the last equation about $t$, we get
\[
 t = 
 \begin{cases}
 \sqrt[n]{b \pm \sqrt{b^2 - 1}}\zeta_n^i& (b>1)\bigskip\\
 \sqrt[n]{|b \pm \sqrt{b^2 - 1}|}\zeta_{2n}^{2i+1}&(b<-1)
 \end{cases}
 \quad (i=0,\dots,n-1)
\]
which shows (2) and (3).
\end{proof}

To give the factorization of $f_n(x)$, we prepare some notation.
\begin{defn}\label{def-c}
For any positive integer $n$, we write $\Psi_n(x)$ for the minimal polynomial of $2\cos(\frac{2\pi}n)$, and put
\begin{align*}
c_n(x)&:=
\begin{cases}
x& (n : odd)\\
x(x + 4) & (n : even).
\end{cases}\\
c^*_n(x)&:=
\begin{cases}
x& (n : odd)\\
1& (n : even).
\end{cases}
\end{align*}
\end{defn}

\begin{thm}\label{factorization-T}
For $b=0,\pm 1, \pm\frac12$, the factorization of $f_n(x)=2T_n\left(\frac{x}2+b\right)-2b$ in $\Z[x]$ is given as follows;
\begin{enumerate}[{\rm (1)}]
\item
in case $b=0$, 
$\displaystyle
f_n(x)=\prod_{\substack{k \mid n \\ \gcd(n/k, 2) = 1}} \Psi_{4k}(x).
$\medskip
\item
in case $b=1$, 
$\displaystyle
f_n(x)
=c_n(x)\prod_{\substack{k \mid n,\ 2 < k}} \Psi_k(x+2)^2.
$\medskip

\item
in case $b=-1$,
$\displaystyle
f_n(x)=c^*_n(x)\prod_{\substack{k \mid n,\ 1<k \\ \gcd(n/k, 2) = 1}} \Psi_{2k}(x-2)^2.
$\medskip

\item
in case $b=\frac12$,
$\displaystyle
f_n(x)=
\prod_{\substack{k \mid n \\ \gcd(n/k, 6) = 1}} \Psi_{6k}(x+1).
$\medskip

\item
in case $b=-\frac12$,
$\displaystyle
f_n(x)=
\prod_{\substack{k \mid n \\ \gcd(n/k, 3) = 1}} \Psi_{3k}(x-1).$
\end{enumerate}
\end{thm}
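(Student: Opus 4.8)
The plan is to handle the five values of $b$ uniformly: pass to the ``untwisted'' polynomial $g(y):=2T_n(y/2)-2b$ (so that $f_n(x)=g(x+2b)$), read off all roots of $g$ with multiplicities from Proposition~\ref{root-T}, recognize each orbit of roots as a translate of some $\Psi_m$, and then match root multisets. Since the leading coefficient of $2T_n(y/2)$ is $2\cdot 2^{n-1}\cdot 2^{-n}=1$, the polynomial $f_n$ is monic of degree $n$; the right-hand side of each claimed identity is a product of monic polynomials (together with the monic linear factors $c_n,c_n^*$ when $b=\pm1$), hence monic, so it suffices to show the right-hand side has exactly the same roots as $f_n$, with the same multiplicities — the degrees then agree automatically.

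For $b=0,\pm\tfrac12$ I would first put every root of $f_n$ into the normal form $2\cos(2\pi j/(Nn))-2b$ with $N=4$ for $b=0$, $N=6$ for $b=\tfrac12$, $N=3$ for $b=-\tfrac12$, and with $j$ confined to a single residue class modulo $N$ coprime to $N$; this comes from $\mathrm{Arccos}(0)=\tfrac\pi2$ and $\mathrm{Arccos}(\pm\tfrac12)=\tfrac\pi3,\tfrac{2\pi}3$ in Proposition~\ref{root-T} together with the translation $x\mapsto x+2b$ (for instance $j\equiv1\pmod4$ for $b=0$, $j\equiv1\pmod6$ for $b=\tfrac12$). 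Because $\gcd(j,N)=1$ one has $\gcd(j,Nn)=\gcd(j,n)$; writing $d=\gcd(j,n)$ and $k=n/d$, the number $2\cos(2\pi j/(Nn))$ equals $2\cos(2\pi(j/d)/(Nk))$ with $\gcd(j/d,Nk)=1$, so its minimal polynomial over $\Q$ is $\Psi_{Nk}$. Translating back, the corresponding root of $f_n$ is a root of $\Psi_{4k}(x)$, $\Psi_{6k}(x+1)$, $\Psi_{3k}(x-1)$ respectively, which already shows every root of $f_n$ lies in one of the factors on the right-hand side; from the residue restriction on $j$ one also checks that no two of these roots collide antipodally, so that $f_n$ is squarefree with $n$ distinct roots.

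The crux is the converse: for each $k\mid n$ satisfying the coprimality condition in the theorem, the roots of $f_n$ attached to this $k$ exhaust \emph{all} the roots of $\Psi_{Nk}$, and no such $k$ contributes when the condition fails. Fixing $k$, the roots attached to it are the $2\cos(2\pi j'/(Nk))$ with $j=(n/k)j'$, $\gcd(j',k)=1$, and $(n/k)j'$ in the prescribed class mod $N$. That last requirement forces $\gcd(j',N)=1$, hence $\gcd(j',Nk)=1$; and, using the hypothesis $\gcd(n/k,N)=1$ to invert $n/k$ mod $N$, it pins $j'$ down to one class modulo $N$ (and is satisfiable precisely under this hypothesis, so $\gcd(n/k,N)\neq1$ means $k$ contributes nothing). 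The reduction $(\Z/Nk)^\times\to(\Z/N)^\times$ is surjective with all fibers of size $\varphi(Nk)/\varphi(N)=\varphi(Nk)/2$; our $j'$ form precisely one fiber, whose antipode $\{-j'\}$ is the complementary fiber (here $\varphi(N)=2$), so the $j'$ are pairwise non-antipodal mod $Nk$ and yield $\varphi(Nk)/2=\deg\Psi_{Nk}$ distinct roots of $\Psi_{Nk}$, necessarily all of them. Summing over the admissible $k$ partitions the $n$ roots of $f_n$, and since the (translated) product $\prod_k\Psi_{Nk}$ is squarefree with exactly these roots, the identities (1),(4),(5) follow.

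For $b=\pm1$ the situation is parallel but with $N=1$ (for $b=1$) or $N=2$ (for $b=-1$), so the modular bookkeeping degenerates: there is no coprimality condition on $k$ beyond $k\mid n$ (for $b=-1$ the parity condition $n/k$ odd is automatic, since $\gcd(j,n)$ divides the odd $j$), and for each such $k>2$ (resp.\ $k>1$) the roots attached to $k$ again run through all roots of $\Psi_k(x+2)$ (resp.\ $\Psi_{2k}(x-2)$). What is new is multiplicity: by Proposition~\ref{root-T} the roots other than the boundary ones $y=\pm2$ are multiple, and comparison with $\deg f_n=n$ forces their multiplicity to be exactly $2$, giving the squares in (2) and (3). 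The boundary roots translate (via $x=y-2b$) to $x=0$ and, when it occurs, $x=-4$ for $b=1$, and to $x=0$ for $b=-1$; they are simple, since $\tfrac{d}{dy}T_n(y/2)\big|_{y=\pm2}=\tfrac n2 U_{n-1}(\pm1)\neq0$ by Lemma~\ref{lem-Ck}, and they occur exactly according to the parity of $n$, which is what $c_n(x)$ and $c^*_n(x)$ of Definition~\ref{def-c} record; a short check shows these linear factors share no zero with the relevant $\Psi_k(x+2)$ or $\Psi_{2k}(x-2)$. The main obstacle throughout is the counting argument of the third paragraph — making the residue-class bookkeeping mod $N$ match the divisibility conditions $\gcd(n/k,N)=1$ exactly, uniformly in $k$ (including when $\gcd(k,N)>1$), and confirming that the $j'$'s obtained form a full set of representatives for the roots of $\Psi_{Nk}$.
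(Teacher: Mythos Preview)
Your proof is correct. Both you and the paper start from Proposition~\ref{root-T}, but you finish differently: the paper simply asserts (citing Proposition~\ref{root-T}) that the right-hand product divides $f_n$, then verifies equality by computing the degree of the right-hand side as an explicit sum of $\tfrac{\phi(\cdot)}{2}$'s and checking it equals $n$ in each of the five cases. You instead carry out a full root-by-root matching, introducing the modulus $N\in\{4,6,3,1,2\}$, rewriting each root as $2\cos(2\pi j/(Nn))$ with $j$ in a fixed residue class mod $N$, and then using the fiber structure of $(\Z/Nk)^\times\to(\Z/N)^\times$ to show that the roots with $\gcd(j,n)=n/k$ exactly fill out $\Psi_{Nk}$ (once each for $N=3,4,6$, twice each for $N=1,2$, which is where the squares in (2),(3) come from). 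Your approach is more explicit---in particular it makes transparent \emph{why} the coprimality condition $\gcd(n/k,N)=1$ appears and why no other $\Psi_m$'s can occur, points the paper's one-line invocation of Proposition~\ref{root-T} leaves to the reader---while the paper's degree count is quicker once divisibility is accepted. One small remark: in your third paragraph the assertion $\varphi(N)=2$ is tailored to $N=3,4,6$; for $N=1,2$ (cases $b=\pm1$) you correctly handle the degeneration separately, where the single fiber of size $\varphi(Nk)$ hits each root of $\Psi_{Nk}$ twice rather than once, matching the multiplicity~$2$.
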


\begin{proof}
Since $2\cos(\frac{2\pi}n)$ is an algebraic integer, the minimal polynomial $\Psi_n(x)$ is a monic irreducible polynomial in $\Z[x]$.
Also it is know that the degree of $\Psi_n(x)$ is $\frac{\phi(n)}2$.
Here $\phi(-)$ is the Euler's totient function.
Thus a polynomial of the form $\Psi_n(x+b)$ with $b\in \Z$ is a monic irreducible polynomial of degree $\frac{\phi(n)}2$.

Since $f_n(x)=2(T_n(\frac{x+2b}2)-b)$, by Proposition \ref{root-T}, the right hand side of the desired factorization is a divisor of $f_n(x)$.
Since the both side of the factorization are monic, it is now enough to show that the degree of the right hand side is $n$.
\begin{enumerate}
\item
We write $n=2^em$ with odd $m$. 
\begin{align*}
\deg \left(\ \prod_{\substack{k \mid n \\ \gcd(n/k, 2) = 1}} \Psi_{4k}(x)\right)
&=\sum_{l \mid m}\deg \Psi_{2^{e+2}l}(x)\\
&=\sum_{l \mid m} \frac{\phi(2^{e+2}l)}2\\
&=\sum_{l \mid m}2^e\phi(l)
=2^e\sum_{l \mid m} \phi(l)=2^e m=n
\end{align*}
\item
\begin{align*}
\deg \left(c_n(x) \prod_{\substack{k \mid n \\ 2 < k}} \Psi_k(x+2)^2 \right)
&= \deg(c_n(x)) + \sum_{\substack{k \mid n \\ 2 < k}}2 \deg(\Psi_k(x+2)) \\
&= \deg(c_n(x)) + \sum_{\substack{k \mid n \\ 2 < k}}2 \cdot \frac{\phi(k)}{2} 
= \sum_{k \mid n}\phi(k) = n. 
\end{align*}
\item
We write $n=2^em$ with odd $m$. 
If $e=0$ then we have
\begin{align*}
\deg \left(c^*_n(x)\prod_{\substack{k \mid n,\ 1<k \\ \gcd(n/k, 2) = 1}} \Psi_{2k}(x-2)^2\right)
&=\deg c^*_n(x)+\sum_{l \mid m,\ 1<l}2\deg \Psi_{2l}(x-2)\\
&=1+\sum_{l \mid m,\ 1<l} \phi(2l)\\
&=\sum_{l \mid m} \phi(l)=m=n.
\end{align*}
If $e>0$ then we have
\begin{align*}
\deg \left(c^*_n(x)\prod_{\substack{k \mid n,\ 1<k \\ \gcd(n/k, 2) = 1}} \Psi_{2k}(x-2)^2\right)
&=\deg c^*_n(x)+\sum_{l \mid m}2\deg \Psi_{2^{e+1}l}(x-2)\\
&=\sum_{l \mid m} \phi(2^{e+1}l)\\
&=2^e\sum_{l \mid m} \phi(l)=2^e m=n.
\end{align*}
\item
We write $n=2^{e_1}3^{e_2}m$ with $\gcd(6,m)=1$. 
\begin{align*}
\deg \left(\ \prod_{\substack{k \mid n \\ \gcd(n/k, 6) = 1}} \Psi_{6k}(x+1)\right)
&=\sum_{l \mid m}\deg \Psi_{2^{e_1+1}3^{e_2+1}l}(x+1)\\
&=\sum_{l \mid m} \frac{\phi(2^{e_1+1}3^{e_2+1}l)}2\\
&=2^{e_1}3^{e_2}\sum_{l \mid m}\phi(l)
=2^{e_1}3^{e_2} m=n.
\end{align*}

\item
We write $n=3^em$ with $\gcd(3,m)=1$. 
\begin{align*}
\deg \left(\ \prod_{\substack{k \mid n \\ \gcd(n/k, 3) = 1}} \Psi_{3k}(x-1)\right)
&=\sum_{l \mid m}\deg \Psi_{3^{e+1}l}(x-1)\\
&=\sum_{l \mid m} \frac{\phi(3^{e+1}l)}2
=3^e\sum_{l \mid m}\phi(l)
=3^e m=n.
\end{align*}
\end{enumerate}
\end{proof}

\begin{cor}\label{gcd-T}
Let $f_n(x)=2T_n\left(\frac{x}2+b\right)-2b$.
For any positive integers $m, n\ (m<n)$, put $d=\gcd(m,n)$ and $m=dm_1, n=dn_1$.
Then we have
\[
\gcd(f_m(x), f_n(x))
=
f_d(x)
\]
in the following cases;
\begin{enumerate}[{\rm (1)}]
\item
$b=0, \gcd(m_1,2)=\gcd(n_1,2)=1$,

\item
$b=1$,
\item
$b=-1$, $\gcd(m_1,2)=\gcd(n_1,2)=1$,
\item
$b=\frac12$, $\gcd(m_1,6)=\gcd(n_1,6)=1$,
\item
$b=-\frac12$, $\gcd(m_1,3)=\gcd(n_1,3)=1.$
\end{enumerate}
Otherwise, $f_m(x)$ and $f_n(x)$ are coprime.
\end{cor}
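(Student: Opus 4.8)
The plan is to read everything off the factorizations in Theorem \ref{factorization-T}. Since $\Psi_N(x)$ is a monic irreducible polynomial in $\Z[x]$ and the algebraic integers $2\cos(2\pi/N)$ are pairwise distinct for distinct positive integers $N$, the polynomials $\Psi_N(x+s)$, with a fixed integer shift $s$, are pairwise distinct monic irreducibles in $\Q[x]$. Writing $f_m(x)$ and $f_n(x)$ in the factored form of Theorem \ref{factorization-T} (which uses a single shift $s_b\in\{0,\pm1,\pm2\}$ depending only on $b$), the gcd $\gcd(f_m,f_n)$ is therefore exactly the product of the irreducible factors common to both, each taken to the minimum of its multiplicities in $f_m$ and in $f_n$. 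So the whole statement becomes a combinatorial comparison of the index sets occurring in the two factorizations.

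Next I would rewrite each of the five factorizations uniformly as $f_n(x)=\prod_k \Psi_{c_b k}(x+s_b)^{e_k}$, where $c_b\in\{1,2,3,4,6\}$ is the multiplier appearing in Theorem \ref{factorization-T}, the product runs over divisors $k\mid n$ satisfying the stated coprimality condition on $n/k$, and $e_k\in\{1,2\}$; here the small correction polynomials $c_n(x)$ and $c^*_n(x)$ in the cases $b=1,-1$ are absorbed as the terms with $k=1$ (and $k=2$ for $b=1$), carrying multiplicity $1$ instead of $2$. Using the $p$-adic valuation $v_p$, the condition ``$\gcd(n/k,N)=1$'' (with $N\in\{2,3,6\}$) says precisely that $v_p(k)=v_p(n)$ for every prime $p\mid N$. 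Hence a factor $\Psi_{c_b k}(x+s_b)$ occurs in both $f_m$ and $f_n$ if and only if $k\mid\gcd(m,n)=d$ and $v_p(k)=v_p(m)=v_p(n)$ for every prime $p\mid c_b$ (no valuation constraint when $c_b=1$, i.e. $b=1$), and then its multiplicity in the gcd is the same as in $f_d$.

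For cases (1), (3), (4), (5), the hypotheses $\gcd(m_1,N)=\gcd(n_1,N)=1$ force $v_p(m)=v_p(d)=v_p(n)$ for every $p\mid N$, so the criterion above for a common factor collapses to ``$k\mid d$ and $v_p(k)=v_p(d)$ for all $p\mid c_b$'', that is, ``$k\mid d$ with $\gcd(d/k,N)=1$''; since the multiplicities also match, the product of the common factors is literally the factorization of $f_d(x)$, giving $\gcd(f_m,f_n)=f_d(x)$. Case (2), $b=1$, needs no hypothesis: the common factors are $\Psi_1(x+2)$, $\Psi_2(x+2)$ (present iff $2\mid d$), and $\Psi_k(x+2)^2$ for $k\mid d$ with $k>2$, which reassemble into $c_d(x)\prod_{k\mid d,\,k>2}\Psi_k(x+2)^2=f_d(x)$. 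In the remaining ``otherwise'' situations with $b\in\{0,\pm1,\pm\frac12\}$, some prime $p\mid N$ divides $m_1$ or $n_1$; since $v_p(d)=\min(v_p(m),v_p(n))$ this forces $v_p(m)\neq v_p(n)$, so no $k$ can satisfy $v_p(k)=v_p(m)=v_p(n)$, there are no common factors, and $f_m(x),f_n(x)$ are coprime.

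Finally, for $b$ outside the five special values (where Theorem \ref{factorization-T} is unavailable) I would show directly that $f_m$ and $f_n$ have no common root: a common root produces $y$ with $T_m(y)=T_n(y)=b$, and then $T_m\circ T_n=T_{mn}=T_n\circ T_m$ (which follows from Lemma \ref{lem} by substituting $y=(t+t^{-1})/2$) gives $T_m(b)=T_{mn}(y)=T_n(b)$; but for rational $b\notin\{0,\pm1,\pm\frac12\}$ one has $T_m(b)\neq T_n(b)$ when $m\neq n$ (strict monotonicity/growth of $T_j(b)$ when $|b|>1$, and the fact that $\tfrac1\pi\arccos b$ is irrational when $|b|<1$), so $f_m(x)$ and $f_n(x)$ are coprime. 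The main obstacle I anticipate is purely bookkeeping: in the cases $b=\pm1$ one must track the interplay of the correction factors $c_n(x),c^*_n(x)$ with the squared factors $\Psi_k(x\pm2)^2$ carefully enough that the common part recombines into exactly $f_d(x)$ rather than into a polynomial with the wrong exponents.
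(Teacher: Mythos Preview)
Your approach is correct and, for the five special values of $b$, essentially identical to the paper's: both read the gcd directly off the factorizations of Theorem~\ref{factorization-T} by intersecting the index sets. Your $p$-adic valuation bookkeeping is a tidy way to treat all five cases uniformly, whereas the paper writes out the case $b=\tfrac12$ and declares the others analogous; but the underlying idea is the same.

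Where you genuinely diverge is the ``otherwise'' case $|b|>1$. The paper invokes the explicit description of the roots of $T_n(\tfrac{x}{2})-b$ from Proposition~\ref{root-T}: a common root would force $B=b+\sqrt{b^2-1}$ to satisfy $B^n=B^{\pm m}$, hence to be a root of unity, which is impossible since $|B|>1$. Your argument is different and arguably slicker: from a common root you extract $y$ with $T_m(y)=T_n(y)=b$, apply the composition identity $T_m\circ T_n=T_n\circ T_m$ to conclude $T_m(b)=T_n(b)$, and rule this out by monotonicity of $j\mapsto |T_j(b)|$ for $|b|>1$ (and Niven's theorem for rational $|b|<1$). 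This buys you a proof that avoids Proposition~\ref{root-T} entirely and even extends to rational $b$ with $|b|<1$, a range the paper's proof does not address (though, since the ambient discussion restricts to $b\in\tfrac12\Z$, such $b$ do not actually arise). The paper's route, on the other hand, stays parallel to the monomial-type argument in Corollary~\ref{gcd-mono}.
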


\begin{proof}
We first prove that $f_m(x)$ and $f_n(x)$ are coprime in case $b>1$.
It is enough to show that $T_m(\frac x2)-b$ and $T_n(\frac x2)-b$ are coprime.
By Proposition \ref{root-T}, let $\alpha=\sqrt[m]{B}\zeta_m^i + \sqrt[m]{B}^{-1}\zeta_m^{-i}$ be a root of $T_m(\frac x2)-b$ where $B=b + \sqrt{b^2 -1 }$.
Suppose that $T_n(\frac {\alpha}2)-b=0$ for some $n>m$.
By Lemma \ref{lem},  we obtain that
\begin{align*}
T_n\left(\frac {\alpha}2\right)-b=0
&\iff
(\sqrt[m]{B}\zeta_m^i)^n + (\sqrt[m]{B}\zeta_m^{i})^{-n}-2b=0\\
&\iff
(\sqrt[m]{B}\zeta_m^i)^n=B \ \text{or}\ B^{-1}
\end{align*}
If $(\sqrt[m]{B}\zeta_m^i)^n=B$ (resp. $(\sqrt[m]{B}\zeta_m^i)^n=B^{-1}$), then we have
$B^n=B^m
$
(resp. $B^n=B^{-m}$)
and thus $B$ is a roots of unity.
But it is impossible because $B=b+\sqrt{b^2-1}>b>1$.
So $T_m(\frac x2)-b$ and $T_n(\frac x2)-b$ have no common root, and hence they are coprime.
The same argument works in case $b<-1$.

We next prove (4) in case $b=\frac12$. The other cases are proven by the same argument.

By the factorization of $f_n(x)$ (Theorem \ref{factorization-T}), we obtain that
\begin{align*}
\gcd(f_m(x), f_n(x))
&=\prod_{\substack{k \mid m,\ k\mid n \\ \gcd(m/k, 6)=\gcd(n/k, 6) = 1}}\Psi_{6k}(x+1)\\
&=\prod_{\substack{k \mid d \\ \gcd(m_1d/k , 6)=\gcd(n_1d/k, 6) = 1}}\Psi_{6k}(x+1).
\end{align*}
If $\gcd(m_1,6)=\gcd(n_1,6)=1$ then we obtain that for $k\mid d$,
\[
\gcd\left(m_1 d/k, 6\right)=\gcd\left(n_1d/k, 6\right) = 1 
\iff
\gcd\left(d/k, 6\right)=1.
\]
Thus we have
\begin{align*}
\gcd(f_m(x), f_n(x))
&=\prod_{\substack{k \mid d \\ \gcd(m_1d/k, 6)=\gcd(n_1d/k, 6) = 1}}\Psi_{6k}(x+1)\\
&=\prod_{\substack{k \mid d \\ \gcd(d/k, 6)= 1}}\Psi_{6k}(x+1)
=f_d(x).
\end{align*}

If one of $m_1, n_1$ is not prime to $6$ then the condition $\gcd(m_1d/k, 6)=\gcd(n_1d/k, 6) = 1$ can not hold.
Thus in this case $f_m(x)$ and $f_n(x)$ are coprime.
\end{proof}

\begin{rmk}
In case $b=0$, Theorem \ref{factorization-T} and Collary \ref{gcd-T} have essentially proved in \cite{H} and \cite{RTW} respectively.
\end{rmk}


\section{Chains of monomial type}

We here discuss conditions in Definition \ref{conditions} for a chain $\{f_n(x)\}_{n \in \N}$ of monomial type in $\Q[x]$; 
$$
f_n(x) = \frac{1}{a} ((ax + b)^n - b)\qquad (a, b\in \Q, a\neq 0).
$$


\subsection{Monic and integer coefficient conditions}

\begin{prop}\label{Z[x]}
Let $\{f_n(x)\}_{n \in \N}$ be a chain of monomial type in $\Q[x]$.
Then we have
\begin{enumerate}[{\rm (1)}]
\item
\ref{itm:i} holds if and only if $a = 1$.

\item \ref{itm:ii} holds if and only if $a, b \in \Z$ and  $a\mid b(b - 1).$ 
\end{enumerate}
\end{prop}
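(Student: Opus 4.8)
The plan is to mirror the strategy used for the Chebyshev case (Proposition \ref{Z[x]-T}): reduce the ``for all $n$'' statement to a check on the first few polynomials plus an induction driven by a recurrence relation. Part (1) is immediate: the leading coefficient of $f_n(x) = \frac{1}{a}((ax+b)^n - b)$ is $\frac{1}{a}\cdot a^n = a^{n-1}$, so $f_n$ is monic for all $n$ precisely when $a^{n-1}=1$ for all $n$, i.e. $a=1$. (Strictly, $a=\pm1$ makes $f_2$ monic, but $f_3$ forces $a^2=1$ and then $f_n$ monic for odd $n$ already forces $a=1$; one line settles this.)

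For part (2), first I would establish the ``only if'' direction by computing low-degree coefficients. We have $f_1(x) = x$, which is harmless, so the real constraints come from $f_2$ and $f_3$. Explicitly $f_2(x) = ax^2 + 2bx + \frac{b^2-b}{a} = ax^2 + 2bx + \frac{b(b-1)}{a}$ and $f_3(x) = a^2x^3 + 3abx^2 + 3b^2 x + \frac{b^3-b}{a}$. From $f_2 \in \Z[x]$ we get $a \in \Z$ (leading coefficient), $2b \in \Z$, and $a \mid b(b-1)$; from $f_3 \in \Z[x]$ we get $a^2 \in \Z$ (automatic), $3ab \in \Z$, $3b^2 \in \Z$, and $a \mid b^3 - b = b(b-1)(b+1)$. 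The point is to combine $2b\in\Z$ and $3b^2\in\Z$ (or $3ab\in\Z$ together with $a\in\Z$) to force $b \in \Z$: if $b = \frac{p}{2}$ with $p$ odd then $3b^2 = \frac{3p^2}{4}\notin\Z$, a contradiction, so $b\in\Z$. Once $a,b\in\Z$, the condition $a\mid b(b-1)$ is exactly what remains (it already implies $a\mid b^3-b$), giving the stated necessary condition.

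For the ``if'' direction, assume $a, b\in\Z$ with $a\mid b(b-1)$; I must show $f_n(x)\in\Z[x]$ for all $n$. The cleanest route is a recurrence analogous to Lemma \ref{rec-rel}. Writing $g_n(x) = (ax+b)^n$ we have $g_n = (ax+b)g_{n-1}$, hence $a f_n(x) + b = (ax+b)(a f_{n-1}(x) + b)$, which rearranges to
\[
f_n(x) = (ax+b)\cdot f_{n-1}(x) + b\,x + \frac{b(b-1)}{a}.
\]
Now $ax+b \in \Z[x]$, $bx\in\Z[x]$, and $\frac{b(b-1)}{a}\in\Z$ by hypothesis; since $f_1(x)=x\in\Z[x]$ (and $f_0=0$), induction on $n$ gives $f_n(x)\in\Z[x]$ for all $n\geq 1$. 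This is the same ``base cases plus integral recurrence'' packaging as in Proposition \ref{Z[x]-T}.

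The only genuinely delicate point is the step in the ``only if'' direction that upgrades $2b\in\Z$ (forced by $f_2$) to $b\in\Z$; one must be careful to invoke a coefficient of $f_3$ — I would use $3b^2\in\Z$ — rather than only $f_2$, since $a\in\frac12\Z$, $b\in\frac12\Z$ with appropriate divisibility could a priori satisfy the $f_2$ constraints alone (this is exactly the phenomenon that does occur in the Chebyshev case, where $a=\frac12$ is allowed). Everything else is a routine coefficient computation, and I would keep it terse, exactly as the paper does for the Chebyshev analogue.
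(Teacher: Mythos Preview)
Your proposal is correct and follows essentially the same approach as the paper: compute $f_2$ and $f_3$ to extract the necessary conditions, then use the recurrence $f_n(x) = (ax+b)f_{n-1}(x) + bx + \frac{b(b-1)}{a}$ to induct for sufficiency; the paper merely writes ``it is easily seen'' where you spell out the $2b\in\Z$, $3b^2\in\Z \Rightarrow b\in\Z$ step. One small slip in your parenthetical for part (1): the leading coefficient of $f_2$ is $a$, not $a^2$, so $a=-1$ does \emph{not} make $f_2$ monic---$f_2$ alone already forces $a=1$ and no appeal to $f_3$ is needed.
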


\begin{proof}
(1)
It is obvious since the leading coefficient of $f_n(x)$ is $a^{n-1}$ for all $n \in \N$.  

(2)
Since $f_2(x) = ax^2 + 2bx + \frac{b^2 - b}{a}$ and $f_3(x) = a^2x^3 + 3abx^2 + 3b^2x + \frac{b^3- b}{a}$, it is easily seen that
\begin{align*}
f_2(x), f_3(x)\in \Z[x] \ \iff \ a, b \in \Z \ \text{and}\ a \mid b(b-1).
\end{align*}
On the other hand, for any $n\geq 2$, we have
\[
f_n(x)=(ax+b)f_{n-1}(x)+bx+\frac{b^2-b}a.
\]
Thus, by induction on $n$, the desired statement is proved.
\end{proof}


\subsection{Euclidean Division}

\begin{prop}\label{E-Div-mono}
For $a, b\in \Q\, (a\neq 0)$, let $f_n(x) = \frac{1}{a} ((ax + b)^n - b)$.
For any $m, n\, (m\leq n)$, the quotient $q_{m,n}(x)$ and the remainder $r_{m,n}(x)$ of the Euclidean division of $f_n(x)$ by $f_m(x)$ are given by
\begin{align*}
q_{m,n}(x)=a\sum_{i=1}^kb^{i-1}f_{n-im}(x)+\sum_{i=1}^kb^i, \quad 
r_{m,n}(x)=b^kf_{n-km}(x)+\frac{b^{k+1}-b}a,
\end{align*}
where $k=[\frac{n}{m}]$.
\end{prop}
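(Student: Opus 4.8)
The plan is to carry out the same reduction as in the proof of Proposition~\ref{E-Div-T}, removing the linear change of variable so that the computation becomes a division of pure binomials. Put $g_n(x) = x^n - b$, so that $f_n(x) = \frac1a\, g_n(ax+b)$ and $g_n(x) = a\, f_n\!\left(\frac{x-b}{a}\right)$. If $Q_{m,n}(x)$ and $R_{m,n}(x)$ denote the quotient and remainder of the Euclidean division of $g_n(x)$ by $g_m(x)$, then substituting $x \mapsto ax+b$ and dividing by $a$ gives $q_{m,n}(x) = Q_{m,n}(ax+b)$ and $r_{m,n}(x) = \frac1a\, R_{m,n}(ax+b)$ (the degree in $x$ is unchanged since $a \neq 0$). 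Translating through $g_j(ax+b) = a\, f_j(x)$, one sees that the asserted formulas for $q_{m,n}, r_{m,n}$ are equivalent to
\begin{align*}
Q_{m,n}(x) = \sum_{i=1}^{k} b^{i-1} g_{n-im}(x) + \sum_{i=1}^{k} b^i, \qquad R_{m,n}(x) = b^k\, g_{n-km}(x) + b^{k+1} - b,
\end{align*}
where $k = [\frac nm]$ and $g_0(x) = 1-b$. So it suffices to prove $g_n = Q_{m,n}\, g_m + R_{m,n}$ with $\deg R_{m,n} < m$.

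For the identity itself I would simply compute. Write $n = km + r$ with $0 \le r < m$ and set $y = x^m$; then $g_n(x) = x^r y^k - b$ and $g_{n-im}(x) = x^r y^{k-i} - b$ for $1 \le i \le k$. In $Q_{m,n}(x)$ the constant $\sum_{i=1}^k b^i$ cancels precisely the $-b$ terms coming from the $g_{n-im}$, leaving $Q_{m,n}(x) = x^r \sum_{i=1}^k b^{i-1} y^{k-i} = x^r\,\dfrac{y^k - b^k}{y-b}$ by the finite geometric sum. Multiplying by $g_m(x) = y - b$ and adding $R_{m,n}(x) = b^k(x^r - b) + b^{k+1} - b$ then telescopes to $x^r(y^k - b^k) + b^k x^r - b = x^r y^k - b = g_n(x)$. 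Finally $R_{m,n}(x) = b^k x^r - b$ (equal to $b^k - b$ when $r = 0$), so $\deg R_{m,n} = r < m$, and by uniqueness of Euclidean division these are indeed the quotient and the remainder; the cases $m = n$ (so $k=1$, $Q_{m,n}=1$, $R_{m,n}=0$) and $m \mid n$ (so $r=0$) are subsumed.

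I do not foresee a genuine obstacle: the argument is bookkeeping, and the only points needing care are tracking the constant terms so that the cancellation against $\sum_{i=1}^k b^i$ is transparent, recognizing the truncated geometric sum $\sum_{i=1}^k b^{i-1}y^{k-i} = \frac{y^k-b^k}{y-b}$, and handling $g_0 = 1-b$ (a generically nonzero constant) in the case $m \mid n$ so that the bound $\deg R_{m,n} < m$ still holds. If one prefers an inductive proof paralleling that of Proposition~\ref{E-Div-T}, one can instead induct on $k$ via $g_n(x) = x^{\,n-m} g_m(x) + b\,(x^{\,n-m}-1)$ together with the recurrence $f_n(x) = (ax+b) f_{n-1}(x) + bx + \frac{b^2-b}{a}$ from the proof of Proposition~\ref{Z[x]}, but the direct verification above is shorter.
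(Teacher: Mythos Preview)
Your proof is correct and follows essentially the same route as the paper: reduce via the linear substitution to the Euclidean division of $g_n(x)=x^n-b$ by $g_m(x)=x^m-b$, identify $q_{m,n}(x)=Q_{m,n}(ax+b)$ and $r_{m,n}(x)=\tfrac1a R_{m,n}(ax+b)$, and then compute $Q_{m,n}$ and $R_{m,n}$ directly. The only difference is that the paper records $Q_{m,n}(x)=\sum_{i=1}^{k}b^{i-1}x^{n-im}$ and $R_{m,n}(x)=b^kx^{n-km}-b$ as the outcome of an ``elementary computation,'' whereas you spell that computation out via the geometric sum; your formulas for $Q_{m,n}$ and $R_{m,n}$ in terms of the $g_j$ collapse to exactly these after expanding $g_{n-im}(x)=x^{n-im}-b$.
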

\begin{proof}
Let $g_n(x)=x^n-b$ and the quotient $Q_{m,n}(x)$ and the remainder $R_{m,n}(x)$ of the Euclidean division of $g_n(x)$ by $g_m(x)$.
Then we have
\[
q_{m,n}(x)=Q_{m,n}(ax+b),\quad r_{m,n}(x)=\frac1a R_{m,n}(ax+b)
\]
since $f_n(x)=\frac1a g_n(ax+b)$ and $g_n(x)=af_n(\frac{x-b}a)$.
By an elementary computation, we have 
$$Q_{m,n}=\sum_{i=1}^kb^{i-1}x^{n-im},\quad R_{m,n}(x)=b^kx^{n-km}-b,$$
which completes the proof.
\end{proof}

\begin{cor}\label{Div-mono}
Let $f_n(x)$ be as in Proposition \ref{E-Div-mono}.
If $b\neq 0,\pm1$, then $f_m(x)$ is not a divisor of $f_n(x)$ for any $m<n.$
In case $b=0, \pm 1$, a condition that $f_m(x)$ is a divisor of $f_n(x)$ is equivalent to
the following
\begin{enumerate}[{\rm (1)}]
\item
in case $b=0$, $m\leq n$.
\item
in case $b=1$, $m\mid n$.
\item
in case $b=-1$, $m\mid n$ and $\gcd(n/m, 2)=1$.
\end{enumerate}
\end{cor}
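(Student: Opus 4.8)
The plan is to reduce the divisibility question to the vanishing of the remainder $r_{m,n}(x)$ supplied by Proposition~\ref{E-Div-mono}. Since $f_m(x)$ has nonzero leading coefficient $a^{m-1}$, the Euclidean division of $f_n(x)$ by $f_m(x)$ in $\Q[x]$ is well defined, and $f_m(x)\mid f_n(x)$ in $\Q[x]$ if and only if $r_{m,n}(x)=0$. Recall from that proposition that, with $k=[n/m]$,
\[
r_{m,n}(x)=b^{k}f_{n-km}(x)+\frac{b^{k+1}-b}{a}.
\]

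First I would dispose of the case $b=0$: then $f_n(x)=a^{n-1}x^{n}$, every coefficient $b^{k}$ and $b^{k+1}-b$ vanishes, so $r_{m,n}(x)=0$ identically and $f_m(x)\mid f_n(x)$ for every $m\le n$, which is statement~(1). Next, assume $b\ne 0$ and split according to whether $m\mid n$. If $m\nmid n$, then $0<n-km<m$, so $f_{n-km}(x)$ has degree $n-km\ge 1$ and leading coefficient $a^{(n-km)-1}$; hence $r_{m,n}(x)$ has degree $n-km\ge 1$ with leading coefficient $b^{k}a^{(n-km)-1}\ne 0$, so it cannot vanish, and $f_m(x)\nmid f_n(x)$.

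If instead $m\mid n$ with $m<n$, then $k=n/m\ge 2$ and $f_{n-km}(x)=f_0(x)=\frac{1-b}{a}$, so
\[
r_{m,n}(x)=b^{k}\cdot\frac{1-b}{a}+\frac{b^{k+1}-b}{a}=\frac{b^{k}-b}{a}=\frac{b\,(b^{k-1}-1)}{a}.
\]
Since $b\ne 0$, this vanishes precisely when $b^{k-1}=1$ with exponent $k-1\ge 1$. Using that the only rational numbers which are roots of unity are $\pm1$, we conclude: if $b\ne 0,\pm1$ there is no $m<n$ with $f_m(x)\mid f_n(x)$, giving the first assertion; if $b=1$ then $b^{k-1}=1$ always, so $f_m(x)\mid f_n(x)\iff m\mid n$, which is~(2); and if $b=-1$ then $b^{k-1}=(-1)^{k-1}=1$ iff $n/m$ is odd, i.e. $\gcd(n/m,2)=1$, which together with $m\mid n$ is~(3).

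The argument is elementary throughout; there is no real obstacle, but the one point deserving care is the case $m\mid n$, where one must evaluate $f_0(x)$ correctly and then recognise the equation $b^{k-1}=1$ as a rational-root-of-unity condition, so that is where I would be most careful.
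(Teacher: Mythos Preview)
Your proof is correct and follows essentially the same approach as the paper: both reduce divisibility to the vanishing of the remainder $r_{m,n}(x)$ from Proposition~\ref{E-Div-mono}, separate the cases $m\nmid n$ and $m\mid n$, and in the latter obtain the constant $\frac{b^{k}-b}{a}$ whose vanishing over $\Q$ forces $b\in\{0,\pm1\}$. Your write-up is slightly more detailed (explicitly computing $f_0(x)$ and the leading coefficient when $m\nmid n$), but the argument is the same.
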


\begin{proof}
The condition that $f_m(x)$ is a divisor of $f_n(x)$ is equivalent to that the remainder $r_{m,n}(x)$ is zero.
By Proposition \ref{E-Div-mono}, the statement in case $b=0$ is clear, and
in case $b\neq 0$, we have
\[
r_{m,n}(x)=0 \iff n=km \ \text{and}\ b^k-b=0.
\]
Since $b\in \Q$, the equation $b^k-b=0$ for $k>1$ implies that $b=0, \pm1$.
This shows the statement in case $b\neq 0, \pm 1.$
And the statements in case $b=\pm 1$ is now clear, and the proof completes. 
\end{proof}


\subsection{Greatest common divisor and Factorization}

From Proposition \ref{Z[x]}, a chain of monic polynomials of monomial type in $\Z[x]$ is given by
\[
\{(x+b)^n-b\}_{n\in \N}\quad (b\in \Z).
\]
We omit a discussion for $b=0$ below.

The following factorizations in $\Z[x]$ are well known 
\begin{equation*}
x^n-1 = \prod_{k \mid n} \Phi_k(x),\quad x^n+1 = \prod_{\substack{k\mid 2n \\ {\rm gcd} (2n/k, 2) = 1}} \Phi_k(x).
\end{equation*}
Here $\Phi_k(x)$ is $k$-th cyclotomic polynomial, i.e. the minimal polynomial of a  primitive $k$-th root of unity.
Thus we have the factorizations 
\begin{align}\label{factrization-mono}
(x+1)^n-1=\prod_{k \mid n} \Phi_k(x+1),\quad (x-1)^n+1=\prod_{\substack{k\mid 2n \\ {\rm gcd} (2n/k, 2) = 1}} \Phi_k(x-1).
\end{align}

\begin{cor}\label{gcd-mono}
Let $f_n(x)=(x+b)^n-b\ (b\neq 0)$.
For any positive integers $m, n\ (m<n)$, put $d=\gcd(m,n)$ and $m=dm_1, n=dn_1$.
Then we have
\[
\gcd(f_m(x), f_n(x))
=
f_d(x)
\]
in the following cases;
\begin{enumerate}[{\rm (1)}]
\item
$b=1$,

\item
$b=-1$, $\gcd(m_1,2)=\gcd(n_1,2)=1.$
\end{enumerate}
Otherwise, $f_m(x)$ and $f_n(x)$ are coprime.
\end{cor}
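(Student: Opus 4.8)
The plan is to mimic the proof of Corollary~\ref{gcd-T}, using the factorizations in \eqref{factrization-mono} together with the observation that the chain of monic polynomials of monomial type in $\Z[x]$ (after discarding $b=0$) splits into exactly the two cases $b=1$ and $b=-1$ as far as divisibility is concerned --- this is precisely the content of Corollary~\ref{Div-mono}, which already tells us that $f_m(x)\mid f_n(x)$ can only happen for $b=0,\pm 1$. So for $b\neq 0,\pm 1$ we first argue that $\gcd(f_m(x),f_n(x))$ is trivial: any common root $\alpha$ of $(x+b)^m-b$ and $(x+b)^n-b$ would force $(\alpha+b)^m=(\alpha+b)^n=b$, hence $(\alpha+b)^{n-m}=1$, so $\alpha+b$ is a root of unity; combined with $(\alpha+b)^m=b$ this makes $b$ a root of unity in $\overline{\Q}$, hence $b=\pm 1$, a contradiction. (One should be a little careful that $f_m$ and $f_n$ could share a common factor without a common root only if one is inseparable, which cannot happen in characteristic zero, so "no common root" does give "coprime".)

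For the cases $b=\pm 1$ I would invoke \eqref{factrization-mono} directly. When $b=1$ we have $f_n(x)=(x+1)^n-1=\prod_{k\mid n}\Phi_k(x+1)$, and since the $\Phi_k(x+1)$ are distinct irreducibles in $\Z[x]$, the gcd picks out exactly the common factors:
\begin{align*}
\gcd(f_m(x),f_n(x))=\prod_{k\mid m,\ k\mid n}\Phi_k(x+1)=\prod_{k\mid d}\Phi_k(x+1)=f_d(x),
\end{align*}
using $\{k:k\mid m\text{ and }k\mid n\}=\{k:k\mid d\}$. This case needs no hypothesis on $m_1,n_1$, matching the statement.

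The case $b=-1$ is the one requiring the coprimality hypothesis $\gcd(m_1,2)=\gcd(n_1,2)=1$, and it is where I expect the only real bookkeeping. Here $f_n(x)=(x-1)^n+1$, and \eqref{factrization-mono} gives $f_n(x)=\prod_{k\mid 2n,\ \gcd(2n/k,2)=1}\Phi_k(x-1)$; the condition $\gcd(2n/k,2)=1$ means $2n/k$ is odd, i.e.\ writing $n=2^en'$ with $n'$ odd, $k$ must be of the form $2^{e+1}\ell$ with $\ell\mid n'$. Then
\begin{align*}
\gcd(f_m(x),f_n(x))=\prod_{\substack{k\mid 2m,\ k\mid 2n\\ 2m/k,\,2n/k\text{ odd}}}\Phi_k(x-1),
\end{align*}
and one checks that the index set is nonempty exactly when the $2$-adic valuations of $m$ and $n$ coincide --- which, given $m=dm_1,n=dn_1$, is equivalent to $m_1,n_1$ both odd, i.e.\ the stated hypothesis; in that case the common $k$'s are $2^{v_2(d)+1}\ell$ with $\ell\mid\gcd(m_1 d_{\mathrm{odd}},\ldots)$, and a short computation collapses this to $\{k\mid 2d:\ 2d/k\text{ odd}\}$, giving $f_d(x)$. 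If instead $v_2(m)\neq v_2(n)$ (equivalently one of $m_1,n_1$ is even), no $k$ can have both $2m/k$ and $2n/k$ odd, so the product is empty and $f_m,f_n$ are coprime. The main obstacle is thus purely combinatorial: correctly translating the "$\gcd(\cdot,2)=1$" conditions into $2$-adic valuation conditions and verifying the index-set identity --- the algebra of the gcd itself is immediate from unique factorization into the distinct irreducibles $\Phi_k(x\pm 1)$.
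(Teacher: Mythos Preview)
Your proposal is correct and follows essentially the same approach as the paper: invoke the cyclotomic factorizations \eqref{factrization-mono} for $b=\pm 1$ (with the $2$-adic bookkeeping for $b=-1$), and for $|b|>1$ rule out a common root by deducing that $b$ would have to be a root of unity. Your route to that last contradiction (via $(\alpha+b)^{n-m}=1\Rightarrow b=(\alpha+b)^m$ is a root of unity) is marginally slicker than the paper's explicit parametrization of roots leading to $b^{n-m}=1$, but the content is identical.
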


\begin{proof}
In cases (1) and (2), the assertion follows from the factorizations in \eqref{factrization-mono}.
In case $b=-1$, if one of $m_1, n_1$ is even, there is no $k$ such that
\[
k\mid 2n,\ k\mid 2m,\  \gcd(2n/k,2)=\gcd(2m/k,2)=1.
\]
Thus $f_m(x)$ and $f_n(x)$ are coprime.

Next we assume $b>1$.
Then the roots of $f_m(x)$ are given by
\[
-b+\sqrt[m]{b}\zeta_m^j\quad (j=0,1,\dots,m-1).
\]
If $f_n(-b+\sqrt[m]{b}\zeta_m^j)=0$ for some $m<n$, then we have
\begin{align*}
f_n(-b+\sqrt[m]{b}\zeta_m^j)=0
\iff 
b^{\frac nm}\zeta_m^{jn}=b
\ \Longrightarrow
b^{n-m}=1
\ \Longrightarrow
b=\pm1
\end{align*}
which contradicts to the assumption $b>1$.
Thus $f_m(x)$ and $f_n(x)$ has no common roots, and hence they are coprime.
The same argument works in case $b<-1$, and the proof complets.
\end{proof}

%
%
%
%
%


\section{Chains over a field of positive characteristic}
Let $k$ be a field of characteristic $p>0$.
In positive characteristic, commuting polynomials have been studied (cf. \cite{Sch}). 
We here discuss chains in  $k[x]$.

\subsection{Classification theorem}
We discuss Theorem \ref{class-chain} for chains in $k[x]$ and prove the following.

\begin{thm}[Theorem \ref{intro-thm-A}]\label{thm-A}
Let $\{f_n(x)\}_{n \in \N}$ be a chain in $k[x]$.
Then $\{f_n(x)\}_{n \in \N}$ is similar to only one of the two $\{ x^n\}_{n\in\N}$ or $\{G_n(x)\}_{n\in\N}$.
Here $G_n(x):=F_n(x) \pmod p$.
\end{thm}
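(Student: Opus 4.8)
The statement is the positive-characteristic analogue of the Block--Theilman--Jacobstahl classification (Theorem \ref{class-chain}), so the natural strategy is to mimic the characteristic-zero argument and track where $p$ intervenes. Given a chain $\{f_n(x)\}_{n\in\N}$ in $k[x]$, the first move is the usual normalization: conjugate by a linear polynomial $\lambda(x)=ax+b$ so that $f_1(x)=x$ and the degree-$2$ member $f_2(x)$ is put in a standard form. Concretely, one wants to exploit the commutation relation $f_2\circ f_n=f_n\circ f_2$; writing $f_2(x)=x^2+\beta x+\gamma$ after making it monic, the identity $f_2(f_n(x))=f_n(f_2(x))$ forces strong constraints on the lower coefficients of every $f_n$. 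In characteristic zero one completes the square (translate $x\mapsto x-\beta/2$) to reduce to either $f_2(x)=x^2$ (monomial case) or $f_2(x)=x^2-2$ (the case similar to $T_2$, giving $\{T_n\}$). In characteristic $p$ this dichotomy should persist when $p\neq 2$, since completing the square is still available and $\alpha=1+\sqrt{b^2-1}$-type computations go through mod $p$; the surviving Chebyshev-like chain is exactly $\{G_n(x)\}=\{F_n(x)\bmod p\}$ because $F_n=2T_n(x/2+1)-2$ already has $\Z$-coefficients (by Proposition \ref{Z[x]-T}, since $a=\tfrac12$, $b=1$ satisfy the integrality condition), so reduction mod $p$ makes sense even when $2$ is not invertible.

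The key structural step is to show that a chain is determined by its degree-$2$ element up to similarity. This follows the classical route: once $f_2$ is normalized, an induction on $n$ shows that $f_n$ is uniquely determined. The engine is that for each $n$ the commutation $f_2\circ f_n=f_n\circ f_2$ (or more robustly, commutation with \emph{all} lower-degree members, using that $\deg$ gives one polynomial of each degree) pins down $f_n$ once $f_2,\dots,f_{n-1}$ are known: comparing coefficients in $f_2(f_n(x))-f_n(f_2(x))=0$ expresses the unknown coefficients of $f_n$ recursively, and a degree/leading-coefficient count shows $f_n$ must be the $n$-th member of whichever model chain ($\{x^n\}$ or $\{G_n\}$) matches $f_2$. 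I would phrase this as: if two chains share the same $f_1$ and $f_2$, they coincide. Then since $\{x^n\}$ and $\{G_n\}$ realize the two possible normalized forms of $f_2$, every chain is similar to one of them. That the two are genuinely distinct (not similar to each other) follows because similarity preserves, e.g., whether $f_2$ has a repeated root, or one can invoke a fixed-point/ramification invariant as in the classical proof.

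The main obstacle is the prime $p=2$. There, completing the square fails, and the very polynomial $F_2(x)=2T_2(x/2+1)-2$ reduces mod $2$ to something degenerate — indeed Corollary \ref{intro-cor-A} asserts $F_{2^r}(x)\equiv x^{2^r}\pmod 2$, so in characteristic $2$ the ``Chebyshev'' chain $\{G_n\}$ collapses onto $\{x^n\}$ in degrees that are powers of $2$, and more care is needed to see it is still a well-defined chain with a polynomial of each degree and that the classification still reads ``$\{x^n\}$ or $\{G_n\}$'' without the two lists silently merging. I expect the write-up to handle $p=2$ by a direct check that $\{G_n\bmod 2\}$ is still a chain (commutativity is inherited from $\Z[x]$) and is not similar to $\{x^n\}$ (e.g. $G_3\not\equiv x^3$ and no linear conjugation over $k$ fixes this), while for $p$ odd the classical argument transfers essentially verbatim. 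A secondary subtlety is that over a field of characteristic $p$ one must be sure the normalizing $\lambda$ exists in $k[x]$ itself (not a larger field) — this requires that the relevant coefficient of $f_2$, after making $f_1=x$, already lies in $k$, which it does since all $f_n\in k[x]$. Assembling these pieces — normalization, the ``$f_2$ determines the chain'' induction, realization by $\{x^n\}$ and $\{G_n\}$, and non-similarity, with the $p=2$ case treated separately — completes the proof.
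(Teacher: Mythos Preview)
Your overall strategy—normalize $f_2$, then argue the chain is determined by $f_2$—matches the paper for odd $p$, with the caveat that the paper handles $p=3$ separately (the coefficient comparisons coming from $f_2\circ f_3=f_3\circ f_2$ degenerate mod $3$, so one must bring in $f_5$ to pin down the constant in the normalized $f_2=x^2+c$). But there is a genuine gap at $p=2$, and it is not a technicality.

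Your engine—that chains sharing $f_2$ coincide—fails outright in characteristic $2$: the paper exhibits eight distinct cubics commuting with $x^2$ over $\F_2$, so no analogue of Proposition \ref{prop-A} holds. Worse, the two model chains actually \emph{agree} at degree $2$ (Corollary \ref{cor-A} gives $G_2=F_2\bmod 2=x^2$), so $f_2$ cannot distinguish them even in principle, and your proposed invariant ``repeated root of $f_2$'' is vacuous here. Your fix for $p=2$ only verifies that $\{x^n\}$ and $\{G_n\}$ exist and are non-similar; it says nothing about \emph{exhaustiveness}, which is the entire content of the classification. The paper's repair is to shift the uniqueness lemma up one degree: Proposition \ref{prop-A2} shows that at most one monic polynomial of each degree commutes with a given monic cubic. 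To exploit it one first normalizes to $f_2=x^2$ (via commutation with $f_3$, not completion of the square), then uses $f_2\circ f_3=f_3\circ f_2$, $f_2\circ f_5=f_5\circ f_2$, and crucially $f_3\circ f_5=f_5\circ f_3$ to force $f_3\in\{x^3,\,G_3\}$; only then does cubic-based uniqueness finish the job. Your plan does not anticipate this shift from $f_2$ to $f_3$, and without it the $p=2$ case does not close.
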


\subsubsection{Proof in case $p\neq 2$}
We prove by a similar strategy of the proof of Theorem \ref{class-chain} in \cite{W}.

In case $p\geq 5$, the same argument in \cite[\S4]{W} works and thus we omit this case.

In case $p=3$.
We write $f_2(x)=a_2x^2+a_1x+a_0$.
By a similarity via $\lambda(x)=\frac{x}{a_2}-\frac{a_1}{2a_2}$, we may assume that $f_2(x)=x^2+c$.
Since $\{f_n(x)\}_{n\in \N}$ is a chain, $f_2(x)$ is commute under composition with
\[
f_5(x)=b_5x^5+b_4x^4+b_3x^3+b_2x^2+b_1x+b_0.
\]
Thus we have
\begin{align}\label{eq-A1}
f_5(x)^2+c=f_5(x^2+c)\iff f_5(x)^2=f_5(x^2+c)-c,
\end{align}
and hence we get 
\[
f_5(x)^2=f_5(-x)^2\iff \bigl(f_5(x)-f_5(-x)\bigr)\bigl(f_5(x)+f_5(-x)\bigr)=0.
\]
Since $b_5\neq 0$ and thus $f_5(x)-f_5(-x)=2b_5x^5+2b_3x^3+2b_1x\neq 0$, we have $f_5(x)=-f_5(-x)$ which implies that $f_5(x)=b_5x^5+b_3x^3+b_1x$.
By substituting this presentation of $f_5(x)$ into equation \eqref{eq-A1}, we get
\begin{align*}
b_5x^{10}+2b_5cx^8+(b_5c^2+b_3)x^6+b_5c^3x^4+(2b_5c^4+b_1)x^2+b_5c^6+b_3c^3+b_1c\\
=b_5^2x^{10}+2b_5b_3x^8+(b_3^2+2b_5b_1)x^6+2b_3b_1x^4+b_1^2x^2+c
\end{align*}
Comparing the coefficients of $x^{10}, x^8$ and $x^6$, we have
\[
b_5=1, \quad b_3=c, \quad b_1=-b_3=-c.
\]
Furthermore, comparing the coefficients of $x^4$ shows that 
\[
b_5c^3=2b_3b_1\iff c^3=c^2\iff c=0, 1.
\]
\begin{enumerate}
\item
$c=0$ case.
$f_2(x)=x^2$.
By Proposition \ref{prop-A}, $\{f_n(x)\}_{n\in \N}$ is similar to $\{x^n\}_{n\in \N}$.
\item
$c=1$ case.
$f_2(x)=x^2+1.$
Consider $\lambda(x)=x-1$.
We have
\[
(\lambda^{-1}\circ f_2\circ \lambda)(x)=x^2-2x\equiv G_2(x)= F_2(x)\pmod 3.
\]
By Proposition \ref{prop-A} below, $\{f_n(x)\}_{n\in \N}$ is similar to $\{G_n(x)\}_{n\in \N}$.
\end{enumerate}

\begin{prop}[\text{\cite[Theorem 3.2]{W}}]\label{prop-A}
Assume $p\geq 3$.
There is at most one polynomial of degree $k\geq 1$ that commutes with a given polynomial of degree 2.
\end{prop}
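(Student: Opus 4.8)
The statement to prove is Proposition \ref{prop-A}: assuming $p\ge 3$, there is at most one polynomial of degree $k\ge 1$ commuting with a given polynomial $g$ of degree $2$.

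\textbf{Plan of proof.} The natural approach is by contradiction: suppose $f$ and $h$ are two distinct polynomials of the same degree $k$, both commuting with $g$, and derive a contradiction from $f\ne h$. First I would normalize $g$ by a similarity $\lambda(x)=\alpha x+\beta$ so that $g(x)=x^2+c$ for some constant $c$ (this is possible since $p\ne 2$, so the linear coefficient of $g$ can be killed by completing the square); commutation with $g$ is preserved under conjugating $f$ and $h$ simultaneously by $\lambda$, so we may assume $g(x)=x^2+c$ throughout. The commutation relation $g(f(x))=f(g(x))$ then reads $f(x)^2+c=f(x^2+c)$, and similarly $h(x)^2+c=h(x^2+c)$.

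\textbf{Key steps.} Subtract the two identities: $f(x)^2-h(x)^2=f(x^2+c)-h(x^2+c)$, i.e.
\[
\bigl(f(x)-h(x)\bigr)\bigl(f(x)+h(x)\bigr)=(f-h)(x^2+c).
\]
Write $e(x):=f(x)-h(x)$, a nonzero polynomial of degree $d<k$ (the leading terms of $f$ and $h$ agree, being determined by the leading coefficient relation from commuting with $g$ — one checks the top coefficient of a degree-$k$ commuting polynomial is forced). Then the identity becomes $e(x)\cdot\bigl(f(x)+h(x)\bigr)=e(x^2+c)$. Comparing degrees: the left side has degree $d+k$ and the right side has degree $2d$, so $d+k=2d$, forcing $d=k$ — contradicting $d<k$. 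Hence $e(x)$ cannot be a nonzero polynomial of degree strictly less than $k$; but since $f$ and $h$ are monic (after normalization) of the same degree $k$, $f-h$ has degree at most $k-1$. This forces $f=h$.

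\textbf{Main obstacle.} The delicate point is justifying that the leading coefficients (and in fact the whole "top half" of the coefficients, or at least enough to guarantee $\deg(f-h)<k$) of any two degree-$k$ polynomials commuting with $g(x)=x^2+c$ must coincide. In characteristic zero this is routine, but in characteristic $p\ge 3$ one must be slightly careful: from $f(x)^2+c=f(x^2+c)$, comparing the coefficient of $x^{2k}$ gives $a_k^2=a_k$ where $a_k$ is the leading coefficient, so $a_k=1$ (as $a_k\ne 0$); then one should either invoke that the recursion determining the remaining coefficients of $f$ from this functional equation has a unique solution once $a_k=1$ is fixed (which is exactly the content being proved, so one must instead argue directly), or better, simply run the degree-comparison argument above on $e=f-h$ without first normalizing leading coefficients: writing $f=a_kx^k+\cdots$, $h=b_kx^k+\cdots$ with $a_k=b_k=1$ forced, $e=f-h$ has degree $d\le k-1$; if $e\ne 0$ then $d\ge 0$ and the identity $e(x)(f(x)+h(x))=e(x^2+c)$ gives $d+k=2d$, i.e. $d=k$, a contradiction. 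The only subtlety is confirming $f(x)+h(x)$ has degree exactly $k$ (true since its leading coefficient is $2\ne 0$, using $p\ne 2$) and that $e(x^2+c)$ has degree exactly $2d$ (true since substitution $x\mapsto x^2+c$ multiplies degree by $2$). So the argument closes cleanly, and the role of the hypothesis $p\ge 3$ is precisely to make $2$ invertible, used both in the normalization of $g$ and in ensuring $\deg(f+h)=k$.
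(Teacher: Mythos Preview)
Your proof is correct and follows essentially the same approach as the paper: normalize the degree-$2$ polynomial to $x^2+c$, observe that any commuting polynomial must be monic, take the difference of two hypothetical distinct commuting polynomials, and derive a degree contradiction from the identity $e(x)(f(x)+h(x))=e(x^2+c)$. Your write-up is in fact slightly more careful than the paper's in spelling out why $\deg(f+h)=k$ requires $p\ne 2$.
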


\begin{proof}
Let $f(x)$ be a polynomial of degree 2.
By a similarity, we may assume that $f(x)=x^2+a$.

Now if we have two distinct polynomials $g_1(x), g_2(x)$ of degree $k\geq 1$ that commutes with $f(x)$, then we have
\[
g_i(x^2+a)=(g_i\circ f)(x)=(f\circ g_i)(x)=g_i(x)^2+a\quad (i=1,2).
\]
This equality shows that $g_i(x)$ is monic.
Therefore $r(x):=g_1(x)-g_2(x)$ is a non-zero polynomial of degree $t<k$.
On the other hand, we have
\begin{align*}
r(x^2+a)
&=
(r\circ f)(x)\\
&=g_1(x^2+a)-g_2(x^2+a)\\
&=g_1(x)^2-g_2(x)^2\\
&=r(x)(g_1(x)+g_2(x)).
\end{align*}
By comparing the degree of both side, we see that $2t=t+k$, or $t=k$ which is a contradiction.
\end{proof}

\subsubsection{Proof in case $p=2$}

\begin{rmk}
In case $p=2$, Proposition \ref{prop-A} does not hold.
For example, the following 8 polynomials of degree 3 commute with $f(x)=x^2$;
\begin{enumerate}
\item
$g_1(x)=x^3$, $h_1(x)=x^3+x^2+x$,
\item
$g_2(x)=x^3+1$,  $h_2(x)=x^3+x^2+x+1$,
\item
$g_3(x)=x^3+x^2$, $h_3(x)=x^3+x+1$,
\item
$g_4(x)=x^3+x$,  $h_4(x)=x^3+x^2+1$.
\end{enumerate}
For each $i=1,\dots,4$, $g_i(x)$ is similar to $h_i(x)$ via $\lambda(x)=x+1$.
\end{rmk}

\begin{proof}[Proof of Theorem \ref{thm-A}]
We write $f_2(x)=a_2x^2+a_1x+a_0$.
By a similarity via $\lambda(x)=\frac{x}{a_2}$, we may assume that $f_2(x)=x^2+a_1x+a_0$.
Since $\{f_n(x)\}_{n\in \N}$ is a chain, $f_2(x)$ is commute under composition with
\begin{align*}
f_3(x)&=b_3x^3+b_2x^2+b_1x+b_0.
\end{align*}
Comparing the coefficients of $(f_2\circ f_3)(x)$ and $(f_3\circ f_2)(x)$;
\begin{align*}
(f_2\circ f_3)(x)&=b_3^2x^6+b_2^2x^4+a_1b_3x^3+(b_1^2+a_1b_2)x^2+a_1b_1x+b_0^2+a_1b_0+a_0,\\
(f_3\circ f_2)(x)&=b_3x^6+b_3a_1x^5+(b_3a_0+b_3a_1^2+b_2)x^4+b_3a_1^3x^3\\
&\qquad +(b_3a_1^2a_0+b_3a_0^2+b_2a_1^2+b_1)x^2+(b_3a_0^2a_1+b_1a_1)x+b_3a_0^3\\
&\qquad \qquad  +b_2a_0^2+b_1a_0+b_0,
\end{align*}
we see that 
\[
b_3=1,\ a_1=0,\ b_2^2+b_2=a_0,\ b_1^2+b_1=a_0^2,\ b_0^2+a_0=f_3(a_0).
\]
Thus we have $f_2(x)=x^2+b_2^2+b_2=(x+b_2)^2+b_2$.
By a similarity via $\lambda(x)=x+b_2$, we may assume that $f_2(x)=x^2$ and $f_3(x)=x^3+b_1x+b_0$.
In this situation, the equality $(f_2\circ f_3)(x)=(f_3\circ f_2)(x)$ gives
\begin{align}\label{f_3-coef}
b_1^2=b_1,\ b_0^2=b_0.
\end{align}
The similar computation for $f_2(x)=x^2$ and $f_5(x)=c_5x^5+c_4x^4+c_3x^3+c_2x^2+c_1x+c_0$ gives
\begin{align}\label{f_5-coef}
c_5=1,\ c_i^2=c_i\ (i=0,1,2,3,4).
\end{align}

Since $f_3(x)$ also commute under composition with $f_5(x)$, comparing the coefficients of $x^{14}, 
\dots, x^{10}$ in $(f_3\circ f_5)(x)=(f_5\circ f_3)(x)$;
\begin{align}\label{f_3-f_5}
(f_3\circ f_5)(x)
=&x^{15}
+c_4x^{14}
+(c_3+c_4^2)x^{13}
+(c_2+c_4^3)x^{12}
+(c_1+c_3c_4^2+c_3^2)x^{11}\\\notag
&+(c_0+c_2c_4^2+c_3^2c_4)x^{10}
+(c_1c_4^2+c_3^3+c_2^2)x^9
+\cdots ,\\\notag
(f_5\circ f_3)(x)
=&x^{15}
+b_1x^{13}
+(b_0+c_4)x^{12}
+c_3x^9+\cdots ,
\end{align}
we see that
\begin{align}\label{b-c}
c_4=0,\ c_3=b_1,\ c_2=b_0=0,\ c_1=b_1,\ c_0=0.
\end{align}
Here we used \eqref{f_5-coef}.
By \eqref{f_3-coef}, we now have
\begin{enumerate}
\item 
$b_1=0$ case.
\[
f_3(x)=x^3,\quad f_5(x)=x^5.
\]
\item
$b_1=1$ case.
\begin{align*}
    f_3(x)&=x^3+x=x(x+1)^2=G_3(x),\\ f_5(x)&=x^5+x^3+x=x(x^2+x+1)^2=G_5(x).
\end{align*}

\end{enumerate}
Thus, up to similarity there are two possibilities of chains of degree $\leq 5$.
The assertion now follows from Proposition \ref{prop-A2} below.
\end{proof}

\begin{prop}\label{prop-A2}
There is at most one monic polynomial of degree $k\geq 1$ that commutes with a given monic polynomial of degree 3.
\end{prop}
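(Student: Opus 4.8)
The plan is to imitate the proof of Proposition \ref{prop-A}, but the degree-$2$ trick there (using that $g_i(x)^2 = g_i(x)^2$ forces cancellation after comparing degrees $2t$ versus $t+k$) must be replaced by an argument that works for a cubic $f$ in characteristic $2$, where Proposition \ref{prop-A} genuinely fails. First I would reduce to a normal form: given a monic cubic $f(x)$ of degree $3$, conjugate by a suitable linear $\lambda(x) = x + \beta$ (in characteristic $2$ the leading coefficient is already $1$ after the conjugation used in the proof of Theorem \ref{thm-A}, and one can kill the degree-$2$ coefficient only if $p \neq 3$, so in general I would just keep $f$ monic) and write $f(x) = x^3 + e_2 x^2 + e_1 x + e_0$. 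Suppose $g_1(x), g_2(x)$ are two monic polynomials of degree $k \geq 1$ with $g_i \circ f = f \circ g_i$.

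The key step is to analyze $r(x) := g_1(x) - g_2(x)$, a polynomial of degree $t < k$ (it has degree $< k$ since both $g_i$ are monic of degree $k$; a priori $r$ could be zero, which is what we want to conclude). From $g_i(f(x)) = f(g_i(x))$ we get
\[
r(f(x)) = g_1(f(x)) - g_2(f(x)) = f(g_1(x)) - f(g_2(x)).
\]
Now I would expand $f(g_1(x)) - f(g_2(x))$ using $f(u) - f(v) = (u-v)\bigl(u^2 + uv + v^2 + e_2(u+v) + e_1\bigr)$, so that
\[
r(f(x)) = r(x)\bigl(g_1(x)^2 + g_1(x)g_2(x) + g_2(x)^2 + e_2(g_1(x)+g_2(x)) + e_1\bigr).
\]
Assuming $r \not\equiv 0$ and comparing degrees: the left side has degree $3t$, and on the right the bracket has degree $2k$ (its leading term is $g_1^2 + g_1 g_2 + g_2^2$, which in characteristic $2$ has leading coefficient $1 + 1 + 1 = 1 \neq 0$, so the degree is exactly $2k$), giving $3t = t + 2k$, i.e. $t = k$ — a contradiction. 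Hence $r \equiv 0$ and $g_1 = g_2$.

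The main obstacle I anticipate is the degree count of the bracket: one must be careful that $g_1(x)^2 + g_1(x)g_2(x) + g_2(x)^2$ does not suffer leading-term cancellation. In characteristic $2$ the three top coefficients are $1,1,1$ summing to $1$, so this is fine; but I should double-check the generic-characteristic analogue would instead give $1+1+1 = 3$, and since this proposition is only invoked in the $p = 2$ branch of Theorem \ref{thm-A}, stating it for monic cubics in characteristic $2$ (or simply noting the identity $1+1+1 \neq 0$ in $\F_2$) suffices. A secondary subtlety is the very first reduction: unlike the quadratic case, I cannot assume $f$ has the form $x^3 + (\text{lower})$ with no $x^2$ term when $p = 3$, but since we only need the result in characteristic $2$, conjugating to make $f$ monic and — as in the proof of Theorem \ref{thm-A} — of the form $x^3 + b_1 x + b_0$ after $\lambda(x) = x + b_2$, is harmless; alternatively the displayed identity above works for any monic cubic without further normalization, so no normalization beyond monicity is actually required.
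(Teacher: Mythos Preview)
Your proposal is correct and follows essentially the same approach as the paper: define $r = g_1 - g_2$ (which in characteristic $2$ is $g_1 + g_2$), factor $f(g_1) - f(g_2) = r \cdot (\text{bracket of degree } 2k)$, and derive the contradiction $3t = t + 2k$. The paper writes the bracket as $g_1^2 + g_1 r + r^2 + a_2 r + a_1$ after substituting $g_2 = g_1 + r$, whereas you leave it as $g_1^2 + g_1 g_2 + g_2^2 + e_2(g_1+g_2) + e_1$; your explicit check that the leading coefficient $1+1+1 = 1 \neq 0$ in $\F_2$ is exactly the point, and your closing observation that no normalization of $f$ beyond monicity is needed matches the paper's treatment.
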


\begin{proof}
Let $f(x)=x^3+a_2x^2+a_1x+a_0$ be a monic polynomial of degree 3.
If we have two distinct monic polynomials $g_1(x), g_2(x)$ of degree $k\geq 1$ that commute with $f(x)$.
Assume $r(x):=g_1(x)+g_2(x)$ is a non-zero polynomial of degree $t<k$.
Since we work over characteristic $2$, we have $g_2(x)=g_1(x)+r(x)$.
On the other hand, we have
\begin{align*}
(r\circ f)(x)
&=(g_1\circ f)(x)+(g_2\circ f)(x)\\
&=f(g_1(x))+f(g_2(x))\\
&=r(x)(g_1(x)^2+g_1(x)r(x)+r(x)^2+a_2r(x)+a_1).
\end{align*}
By comparing the degree of both sides, we see that $3t=t+2k$, or $t=k$ which is a contradiction.
\end{proof}

\subsection{Factorization of $F_n(x)$ and $\tilde{F}_n(x)$ modulo $p$}
From Theorem \ref{factorization-T}(2) and \eqref{factrization-mono}, we have
\[
F_n(x)=c_n(x)\prod_{\substack{k \mid n,\ 2 < k}} \Psi_k(x+2)^2
,\quad 
\tilde{F}_n(x)=(x+1)^n-1 = \prod_{k \mid n} \Phi_k(x+1)
\]
Here $\Psi_k(x)$ is the minimal polynomial of $2\cos(\frac{2\pi}k)$, and $\Phi_k(x)$ is the $k$-th cyclotomic polynomial.
And $c_n(x)$ is defined as follows (see Definition \ref{def-c});
\begin{align*}
c_n(x)&:=
\begin{cases}
x& (n : odd)\\
x(x + 4) & (n : even).
\end{cases}
\end{align*}
Therefore, to give the factorization of $F_n(x), \tilde{F}_n(x)\pmod p$, it is enough to give the factorization of $\Psi_n(x+2), \Phi_n(x+1) \pmod p$ for any $n$.
For cyclotomic polynomials, the following theorem is known.

\begin{thm}[\cite{G}]\label{fact-cycl}
Let $p$ be a prime, and let $n$ be a positive integer copirme to $p$.
Let $f$ be the (multiplicative) order of $p\pmod n$.
Denote by $\phi$ the Euler's totient function.
\begin{enumerate}[{\rm (1)}]
\item
$\Phi_n(x)\pmod p$ factors into a product of $\phi(n)/f$ distinct irreducible polynomials each of degree $f$. 
\item
For any integer $r$, $\Phi_{p^rn}(x)\equiv \Phi_n(x)^{\phi(p^r)}\pmod p$.
\end{enumerate}
\end{thm}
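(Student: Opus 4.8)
The plan is to pass to an algebraic closure $\overline{\F_p}$ and read off the factorization from the orders of roots of unity, together with the classical recursions for cyclotomic polynomials. First, for (1), I would note that since $\gcd(n,p)=1$ the polynomial $x^n-1$ is separable over $\F_p$, because $\gcd(x^n-1,\,nx^{n-1})=1$; hence its roots form a cyclic group $\mu_n\subset\overline{\F_p}^{\times}$ of order exactly $n$. From the identity $x^n-1=\prod_{d\mid n}\Phi_d(x)$ (valid over $\Z$, hence over $\F_p$) together with an induction on $n$ using pairwise coprimality of the reduced factors (again a consequence of separability), the roots of $\Phi_n(x)$ in $\overline{\F_p}$ are precisely the elements of exact order $n$, and $\Phi_n(x)\bmod p$ is separable as a divisor of $x^n-1$. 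Then I would fix a root $\zeta$ of $\Phi_n(x)$ and compute $[\F_p(\zeta):\F_p]$: since $\F_{p^d}^{\times}$ is cyclic of order $p^d-1$, one has $\zeta\in\F_{p^d}\iff \zeta^{p^d-1}=1\iff n\mid p^d-1\iff p^d\equiv 1\pmod n$, so the least such $d$ is $f$. Consequently the minimal polynomial over $\F_p$ of every root of $\Phi_n(x)$ has degree $f$, and since $\Phi_n(x)$ is separable it is a product of distinct monic irreducibles each of degree $f$; counting degrees gives exactly $\deg\Phi_n/f=\phi(n)/f$ of them.

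For (2), I would use the standard recursions $\Phi_{pm}(x)=\Phi_m(x^p)/\Phi_m(x)$ when $p\nmid m$ and $\Phi_{pm}(x)=\Phi_m(x^p)$ when $p\mid m$, together with the Frobenius identity $g(x)^p=g(x^p)$ for $g\in\F_p[x]$ (valid because $a^p=a$ in $\F_p$). Reducing mod $p$ yields $\Phi_{pm}(x)\equiv\Phi_m(x)^{p-1}=\Phi_m(x)^{\phi(p)}\pmod p$ when $p\nmid m$, and $\Phi_{pm}(x)\equiv\Phi_m(x)^{p}\pmod p$ when $p\mid m$. The case $r=1$ of the claim is the first of these with $m=n$; for $r\geq 2$ one applies the second with $m=p^{r-1}n$ and the induction hypothesis, using $p\,\phi(p^{r-1})=\phi(p^r)$ to identify the exponent.

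The only delicate point is the claim used in (1) that reduction mod $p$ neither merges nor creates roots of $\Phi_n$, so that over $\F_p$ the roots of $\Phi_n(x)$ are exactly the elements of exact order $n$ in $\overline{\F_p}^{\times}$; this rests entirely on the separability of $x^n-1$ for $p\nmid n$ and the inductive Möbius-type argument from $x^n-1=\prod_{d\mid n}\Phi_d(x)$. Everything else is bookkeeping with orders of elements and the recursion formulas, so the result is classical; see \cite{G}.
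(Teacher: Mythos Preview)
Your proof sketch is correct and follows the standard argument for this classical result. Note, however, that the paper does not actually prove this theorem: it is quoted as a known fact from \cite{G} (Guerrier) and used as a black box to derive the analogous statement for the minimal polynomials $\Psi_n(x)$ in Proposition~\ref{factor-G}. So there is no ``paper's own proof'' to compare against; your sketch simply supplies what the paper omits by citation.

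For the record, both parts of your argument are sound. In (1), the separability of $x^n-1$ over $\F_p$ for $\gcd(n,p)=1$ is exactly what guarantees that the reduced $\Phi_n$ has distinct roots, and the computation $\zeta\in\F_{p^d}\iff n\mid p^d-1$ correctly identifies the degree of each irreducible factor as the multiplicative order $f$ of $p\pmod n$. In (2), the recursions $\Phi_{pm}(x)=\Phi_m(x^p)/\Phi_m(x)$ for $p\nmid m$ and $\Phi_{pm}(x)=\Phi_m(x^p)$ for $p\mid m$, combined with the Frobenius identity $g(x^p)=g(x)^p$ in $\F_p[x]$, give the base case and the inductive step cleanly, with the exponent bookkeeping $p\,\phi(p^{r-1})=\phi(p^r)$ closing the induction.
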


We derive the factorization for $\Psi_n(x)$ from Theorem \ref{fact-cycl}.

\begin{prop}\label{factor-G}
Let $p$ be a prime and let $n$ be a positive integer copirme to $p$.
Let $f$ be the (multiplictive) order of $p\pmod n$.
\begin{enumerate}[{\rm (1)}]
\item
If $p^{f/2}\equiv -1\pmod n$, then $\Psi_n(x)\pmod p$ factors into a product of $\phi(n)/f$ distinct irreducible polynomials each of degree $f/2$. 
\item
If $f$ is odd or $p^{f/2}\not \equiv -1\pmod n$, then $\Psi_n(x)\pmod p$ factors into a product of $\phi(n)/2f$ distinct irreducible polynomials each of degree $f$. 

\item
For any integer $r$, $\Psi_{p^rn}(x)\equiv \Psi_n(x)^{\phi(p^r)}\pmod p$.
\end{enumerate}
\end{prop}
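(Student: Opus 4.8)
\emph{Approach.} The engine is the classical identity expressing $\Psi_n$ as a ``folded'' cyclotomic polynomial. For $n\ge 3$, pairing a primitive $n$-th root of unity $\zeta$ with $\zeta^{-1}$ and using $(x-\zeta)(x-\zeta^{-1})=x\bigl((x+x^{-1})-(\zeta+\zeta^{-1})\bigr)$, one obtains
\[
\Phi_n(x)=x^{\phi(n)/2}\,\Psi_n\!\left(x+x^{-1}\right)\qquad\text{in }\Z[x,x^{-1}],
\]
because the $\phi(n)/2$ numbers $\zeta+\zeta^{-1}$ ($\zeta$ primitive) are pairwise distinct and are exactly the conjugates of $2\cos(\frac{2\pi}{n})$. (For $n=1,2$ the polynomial $\Psi_n$ is linear and all three assertions are trivial, so throughout I assume $n\ge 3$; then $-1$ has order $2$ in $(\Z/n)^\times$.) This identity survives reduction modulo $p$ verbatim, and together with Theorem \ref{fact-cycl} it is all we use.

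\emph{Part (3).} By Theorem \ref{fact-cycl}(2), $\Phi_{p^rn}(x)\equiv\Phi_n(x)^{\phi(p^r)}\pmod p$. Replacing each $\Phi$ by the right-hand side of the identity and cancelling the (invertible) common power of $x$, using $\phi(p^rn)=\phi(p^r)\phi(n)$, gives $\Psi_{p^rn}(x+x^{-1})\equiv\Psi_n(x+x^{-1})^{\phi(p^r)}$ in $\F_p[x,x^{-1}]$. Since the substitution map $\F_p[y]\to\F_p[x,x^{-1}]$, $y\mapsto x+x^{-1}$, is injective, this forces $\Psi_{p^rn}(y)\equiv\Psi_n(y)^{\phi(p^r)}\pmod p$.

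\emph{Parts (1) and (2).} Work in $\overline{\F_p}$. Since $\gcd(n,p)=1$, the polynomial $\Phi_n\bmod p$ is separable with root set the primitive $n$-th roots of unity, so by the identity the roots of $\Psi_n\bmod p$ are exactly the elements $\alpha_\zeta:=\zeta+\zeta^{-1}$ with $\zeta$ primitive. These are pairwise distinct: $\zeta+\zeta^{-1}=\xi+\xi^{-1}$ implies $(\zeta-\xi)(\zeta-\xi^{-1})=0$, so $\xi\in\{\zeta,\zeta^{-1}\}$, and $\zeta\ne\zeta^{-1}$ as $n\ge 3$. Hence $\Psi_n\bmod p$ is separable, and it is the product of the distinct $\F_p$-minimal polynomials of the $\alpha_\zeta$, one per Frobenius orbit. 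Now $\mathrm{Gal}(\F_p(\zeta)/\F_p)$ is cyclic of order $f$, generated by Frobenius and identified with $\langle p\rangle\subseteq(\Z/n)^\times$; and $\mathrm{Gal}(\F_p(\zeta)/\F_p(\alpha_\zeta))$ is the stabilizer $H:=\{a\in\langle p\rangle:\zeta^a+\zeta^{-a}=\zeta+\zeta^{-1}\}=\langle p\rangle\cap\{\pm1\}$, the last equality from $\zeta^a+\zeta^{-a}=\zeta+\zeta^{-1}\iff(\zeta^a-\zeta)(\zeta^a-\zeta^{-1})=0\iff a\equiv\pm1\pmod n$. Thus $[\F_p(\alpha_\zeta):\F_p]=f/|H|$, which is $f/2$ precisely when $-1\in\langle p\rangle$ — equivalently, since $-1$ has order $2$, when $f$ is even and $p^{f/2}\equiv-1\pmod n$ — and is $f$ otherwise. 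This degree is independent of $\zeta$, so, as $\deg\Psi_n=\phi(n)/2$, the number of distinct irreducible factors is $(\phi(n)/2)/(f/2)=\phi(n)/f$ in the first case and $(\phi(n)/2)/f=\phi(n)/2f$ in the second, which are exactly (1) and (2).

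\emph{Main obstacle.} Nothing here is deep, but two points need care. First, the separability of $\Psi_n\bmod p$ must be \emph{proved} from $\gcd(n,p)=1$ (it does not follow from $\Psi_n$ being a minimal polynomial in characteristic $0$). Second, the condition ``$-1\in\langle p\rangle$ modulo $n$'' must be matched correctly with the stated dichotomy — in particular subsuming the $f$-odd case, where $p^{f/2}$ is not even defined, and using $n\ge 3$ so that $-1$ genuinely has order $2$. Pushing the identity $\Phi_n(x)=x^{\phi(n)/2}\Psi_n(x+x^{-1})$ through reduction mod $p$ and through Theorem \ref{fact-cycl} is then routine bookkeeping.
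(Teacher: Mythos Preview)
Your proof is correct and follows essentially the same route as the paper: both arguments rest on the identity $\Phi_n(x)=x^{\phi(n)/2}\Psi_n(x+x^{-1})$ together with Theorem~\ref{fact-cycl}, and both reduce parts (1)--(2) to deciding whether $\zeta^{-1}$ lies in the Frobenius orbit of $\zeta$, i.e.\ whether $-1\in\langle p\rangle\subset(\Z/n)^\times$. The only cosmetic difference is that the paper writes out the minimal polynomial of $\zeta+\zeta^{-1}$ explicitly as $x^{-f/2}h_\zeta(x)$ or $x^{-f}h_\zeta(x)h_{\zeta^{-1}}(x)$, whereas you compute its degree as $f/|\langle p\rangle\cap\{\pm1\}|$ via the Galois stabilizer; your version has the small advantage of making the separability of $\Psi_n\bmod p$ and the $n\ge 3$ hypothesis explicit.
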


\begin{proof}
Put $X:=x+x^{-1}$.
For any positive integer $n$, we have 
\begin{align}\label{eq-psi-phi}
x^{-\frac{\phi(n)}2}\Phi_n(x)=\Psi_n(X).
\end{align}
Let $\zeta\in \F_{p^f}$ be a primitive $n$-th root of unity.
A factor of $\Phi_n(x)\pmod p$ is given by
\[
h_\zeta(x):=(x-\zeta)(x-\zeta^p)\cdots (x-\zeta^{p^{f-1}}).
\]
\begin{enumerate}[{\rm (1)}]
\item
By the assumption $p^{f/2}\equiv -1\pmod n$, we have $h_\zeta(x)=h_{\zeta^{-1}}(x)$, and hence the minimal polynomial of $\zeta+\zeta^{-1}$ over $\F_p$ is
\begin{align*}
x^{-\frac f2}h_\zeta(x)
&=(x^2-(\zeta+\zeta^{-1})x+1)\cdots (x^2-(\zeta^{p^{\frac f2-1}}+\zeta^{p^{f-1}})x+1)\\
&=(X-(\zeta+\zeta^{-1}))\cdots (X-(\zeta^{p^{\frac f2-1}}+\zeta^{p^{f-1}})).
\end{align*}
Thus $\Psi_n(X)\pmod p$ has an irreducible factor of degree $f/2$, and the assertion now follows from Theorem \ref{fact-cycl}(1) and \eqref{eq-psi-phi}.
\item
By the assumption, we have $h_\zeta(x)\neq h_{\zeta^{-1}}(x)$, and hence the minimal polynomial of $\zeta+\zeta^{-1}$ over $\F_p$ is
\begin{align*}
x^{-f}h_\zeta(x)h_{\zeta^{-1}}(x)
&=(x^2-(\zeta+\zeta^{-1})x+1)\cdots (x^2-(\zeta^{p^{f-1}}+\zeta^{-p^{f-1}})x+1)\\
&=(X-(\zeta+\zeta^{-1}))\cdots (X-(\zeta^{p^{f-1}}+\zeta^{-p^{f-1}})).
\end{align*}
Thus $\Psi_n(X)\pmod p$ has an irreducible factor of degree $f$, and the assertion now follows from Theorem \ref{fact-cycl}(1) and \eqref{eq-psi-phi}.

\item
From Theorem \ref{fact-cycl}(3) and \eqref{eq-psi-phi}, we obtain that 
\[
\Psi_{p^rn}(X)=x^{-\frac{\phi(p^rn)}2}\Phi_{p^rn}(x)=\left(x^{-\frac{\phi(n)}2}\Phi_n(x)\right)^{\phi(p^r)}=\Psi_n(x)^{\phi(p^r)}\pmod p.
\]
\end{enumerate}
\end{proof}

\begin{cor}[Corollary \ref{intro-cor-A}]\label{cor-A}
Let $p$ be a prime.
For any positive integer $r$, we have
\[
F_{p^r}(x)\equiv \tilde{F}_{p^r}(x)\equiv x^{p^r}\pmod p.
\]
\end{cor}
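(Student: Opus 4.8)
The plan is to treat the two chains separately, reducing both to the behaviour of the Frobenius map on $\F_p[x]$.

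For $\tilde{F}_{p^r}$ there is essentially nothing to prove: Frobenius gives $(x+1)^{p^r}=x^{p^r}+1$ in $\F_p[x]$, so $\tilde{F}_{p^r}(x)=(x+1)^{p^r}-1\equiv x^{p^r}\pmod p$.

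For $F_{p^r}$ the key idea is to pass to the prime case through composition, rather than reducing $T_{p^r}$ modulo $p$ directly; the latter is hopeless when $p=2$, since the leading coefficient $2^{n-1}$ of $T_n$ vanishes mod $2$. Recall that $\{F_n\}$ is similar to $\{T_n\}$ via $\lambda(x)=\tfrac{x}{2}+1$, i.e.\ $F_n=\lambda^{-1}\circ T_n\circ\lambda$ with $\lambda^{-1}(x)=2x-2$. Since $T_{mn}=T_m\circ T_n$, this gives $F_{p^r}=\lambda^{-1}\circ T_p^{\circ r}\circ\lambda=(\lambda^{-1}\circ T_p\circ\lambda)^{\circ r}=F_p^{\circ r}$, the $r$-fold self-composite of $F_p$. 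As $F_p,F_{p^r}\in\Z[x]$ and the reduction map $\Z[x]\to\F_p[x]$ commutes with composition of polynomials, it suffices to show $F_p(x)\equiv x^p\pmod p$; granting this, $F_{p^r}(x)\equiv(x^p)^{\circ r}=x^{p^r}\pmod p$.

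So the heart of the matter is $F_p(x)\equiv x^p\pmod p$. For $p=2$ this is a one-line check: $F_2(x)=2T_2\!\left(\tfrac{x}{2}+1\right)-2=x^2+4x\equiv x^2\pmod 2$. For odd $p$, I would use the classical congruence $T_p(y)\equiv y^p\pmod p$ — which follows, say, from $T_n(y)=\sum_{j\ge 0}\binom{n}{2j}y^{n-2j}(y^2-1)^j$, since $p\mid\binom{p}{2j}$ for $0<2j\le p-1$ leaves only the $j=0$ term modulo $p$ — together with Frobenius and Fermat's little theorem $2^{p-1}\equiv 1\pmod p$:
\[
F_p(x)=2T_p\!\left(\tfrac{x}{2}+1\right)-2\equiv 2\left(\tfrac{x}{2}+1\right)^{p}-2\equiv 2\left(\tfrac{x^p}{2^p}+1\right)-2=\frac{x^p}{2^{p-1}}\equiv x^p\pmod p.
\]
The only step requiring a moment's thought is the decision to compose rather than naively reduce $T_{p^r}$ in characteristic $2$; everything else is routine. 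One could alternatively read the result off the factorization $F_{p^r}(x)=c_{p^r}(x)\prod_{k\mid p^r,\,2<k}\Psi_k(x+2)^2$ by reducing each $\Psi_{p^j}$ modulo $p$ and checking that the exponents of $x$ telescope to $p^r$, but the composition argument above is shorter and uniform in $p$.
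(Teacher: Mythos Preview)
Your proof is correct and takes a genuinely different route from the paper's. The paper derives the result from its factorization machinery: Theorem~\ref{factorization-T}(2) writes $F_{p^r}(x)=c_{p^r}(x)\prod_{i=1}^{r}\Psi_{p^i}(x+2)^2$, and Proposition~\ref{factor-G}(3) (together with Theorem~\ref{fact-cycl}(2) for $\tilde F$) collapses each factor to a power of $x$ modulo $p$, after which the exponents telescope via $\sum_{i=0}^{r}\phi(p^i)=p^r$. This is exactly the alternative you sketch in your final sentence. Your main argument instead exploits the chain structure: $F_{p^r}=F_p^{\circ r}$ reduces everything to the base case $F_p\equiv x^p$, which you settle from $T_p(y)\equiv y^p\pmod p$ for odd $p$ and by inspection for $p=2$. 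Your route is shorter, self-contained, and needs none of the $\Psi_n$ apparatus; the paper's route has the complementary virtue of exhibiting the corollary as an immediate dividend of the factorization theorem already established.
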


\begin{proof}
By Theorem \ref{factorization-T}(2), \eqref{factrization-mono}, Theorem \ref{fact-cycl}(3) and Proposition \ref{factor-G}(3), we have
\[
F_{p^r}(x)\equiv x \prod_{i=1}^r\Psi_1(x+2)^{\phi(p^i)}=x^{p^r}\pmod p,
\]
\[
\tilde{F}_{p^r}(x)\equiv x\prod_{i=1}^r\Phi_1(x+1)^{\phi(p^i)}=x^{p^r}\pmod p.
\]
\end{proof}

Combine Theorem \ref{thm-A} and Corollary \ref{cor-A}, we see that for any chain $\{f_n(x)\}_{n\in \N}$ over a filed $k$ of characteristic $p>0$, $f_{p^r}(x)$ is similar to $x^{p^r}$.

%
%

\end{document}